\newcommand{\R}{{\mathds R}}
\newcommand{\N}{{\mathds N}}
\newcommand{\eps}{\varepsilon}
\newtheorem{definition}{Definition}[section]
\newtheorem{lemma}[definition]{Lemma}
\newtheorem{theorem}[definition]{Theorem}
\newtheorem{proposition}[definition]{Proposition}
\newtheorem{remark}[definition]{Remark}
\newtheorem{corollary}[definition]{Corollary}
\title[Unbounded diffusion and inverse square potentials]{Elliptic operators with unbounded diffusion coefficients perturbed by inverse square potentials in $L^p$--spaces}
\author{S. Fornaro}
\address{Dipartimento di Matematica ``F. Casorati'', Universit\`a degli studi di Pavia, via Ferrata, 1, 27100, Pavia, Italy.}
\email{simona.fornaro@unipv.it}
\author{F. Gregorio}
\author{A. Rhandi}
\thanks{The authors are members of the Gruppo Nazionale per l'Analisi Matematica,
la Probabilit\`a e le loro Applicazioni (GNAMPA) of the Istituto Nazionale di Alta Matematica
(INdAM)}
\address{Dipartimento di Ingegneria dell'Informazione, Ingegneria Elettrica e Matematica Applicata, Universit\`a di Salerno, Via Ponte Don Melillo, 84084 FISCIANO (Sa), Italy.}
\email{fgregorio@unisa.it}
\email{arhandi@unisa.it}
\keywords{Inverse square potential, positivity preserving $C_0$-semigroup, core, dissipative and dispersive operator, Hardy's inequality, unbounded diffusion}
\subjclass[2000]{35P05, 35J70, 35K65}
\date{}
\begin{document}

\maketitle

\begin{abstract}
In this paper we give sufficient conditions on $\alpha \ge 0$ and $c\in \R$ ensuring that the space of test functions $C_c^\infty(\R^N)$ is a core for the operator
$$L_0u=(1+|x|^\alpha )\Delta u+\frac{c}{|x|^2}u=:Lu+\frac{c}{|x|^2}u,$$
and $L_0$ with a suitable domain generates a quasi-contractive and positivity preserving $C_0$-semigroup in $L^p(\R^N),\,1<p<\infty$. The proofs are based on some $L^p$-weighted Hardy's inequality and perturbation techniques.
\end{abstract}



\section{Introduction}
\noindent
Let us consider the elliptic operator
\begin{equation*} \label{op-L}
L=(1+|x|^\alpha)\Delta,
\end{equation*}
where $\alpha\ge 0$. In this paper we want to study the perturbation of $L$ with a singular potential. More precisely, we consider the operator
\begin{equation*} \label{new-op}
L_0=L+\frac{c}{|x|^2}
\end{equation*}
and we look for optimal conditions on $c \in\R$ and $\alpha$ ensuring that $L_0$ with a suitable domain generates a positivity preserving $C_0$-semigroup  in $L^p(\R^N)$.

Let us recall first some known results for Schr\"odinger operators with inverse-square potentials.\\
It is known, see \cite[Theorem 2]{S}, that the realization $A_2$ of the Schr\"odinger operator $\mathcal{A}=\Delta +c|x|^{-2}$ in $L^2(\R^N)$ is essentially selfadjoint on $C_c^\infty(\R^N\setminus \{0\})$ if and only if $$c\le \frac{(N-2)^2}{4}-1=:c_0,$$
cf. \cite[Proposition VII.4.1]{EE} or \cite[Theorem X.11]{RS}, when $N\ge 5$.
\\
The characterization of the existence of positive weak solutions to the parabolic problem associated with the operator $\mathcal{A}$ was first discovered by Baras and Goldstein \cite{BG}, where they proved that a positive weak solution exists if and only if $c\le \frac{(N-2)^2}{4}$.

Using perturbation techniques it is proved in \cite[Theorem 6.8]{Oka82} that $A_2$ is selfadjoint provided that $c<c_0$. These techniques were generalized to the $L^p$-setting, $1<p<\infty$, and it is obtained that $A_p$, the realization of $\mathcal{A}$ in $L^p(\R^N)$, with domain $W^{2,p}(\R^N)$ generates a contractive and positive $C_0$-semigroup in $L^p(\R^N)$, and $C_c^\infty(\R^N)$ is a core for $A_p$, if $N>2p$ and $$c<\frac{(p-1)(N-2p)N}{p^2}=:\beta_0,$$
see \cite[Theorem 3.11]{Oka96}. In the case where $N\le 2p$, it is proved that $A_p$ with domain $D(A_p)=W^{2,p}(\R^N)\cap \{u\in L^p(\R^N);\,|x|^{-2}u\in L^p(\R^N)\}$ is $m$-sectorial if $c<\beta_0$, see \cite[Theorem 3.6]{Oka96}.

If one replaces the Laplacian by the Ornstein-Uhlenbeck operator similar results were obtained recently in \cite{DR, FR}.

In this paper we obtain similar results as in \cite[Theorem 3.11]{Oka96} when replacing $\Delta$ by $L$. We discuss also the generation of a $C_0$-semigroup of the operator $(1+|x|^\alpha)\Delta -\eta |x|^\beta +\frac{c}{|x|^2}$, where $\eta$ is a positive constant, $\alpha \ge 2$ and $\beta >\alpha -2$.

Now, let us recall some definitions. An operator $(A,D(A))$ on a Banach space $X$ is called accretive if $-A$ is dissipative. It is $m$--accretive if $A$ is accretive and $X=R(\lambda +A)$, the range of the operator $(\lambda +A)$. An accretive operator $(A,D(A))$ is called essentially $m$--accretive if its closure $\overline{A}$ is $m$--accretive.

Our approach relies on the following perturbation result due to N. Okazawa, see \cite[Theorem 1.7]{Oka96}.


\begin{theorem}\label{Oka-th1.7}
Let $A$ and $B$ be linear $m$--accretive operators in $L^p(\R^N)$, with $p\in (1,+\infty)$. Let $D$ be a core of $A$.
 Assume that
\begin{itemize}
\item[(i)] there are constants $\tilde{c}, a\ge 0$ and $k_1>0$ such that for all $u\in D$ and $\eps >0$
$$
{\mathcal Re}\langle Au, \|B_\eps u\|_p^{2-p}|B_\eps u|^{p-2}B_\eps u\rangle \ge k_1\|B_\eps u\|_p^2 -\tilde{c}\|u\|_p^2-a\|B_\eps u\|_p\|u\|_p
$$
where $B_\eps$ denote the Yosida approximation of $B$;
\item[(ii)]  ${\mathcal Re}\langle u, \|B_\eps u\|_p^{2-p}|B_\eps u|^{p-2}B_\eps u\rangle \ge 0$, for all $u \in L^p(\R^N)$ and $\eps >0$;
\item[(iii)] there is $k_2>0$ such that $A-k_2 B$ is accretive.
\end{itemize}
Set $k=\min\{k_1,k_2\}$. If $c>-k$ then $A+cB$ with domain $D(A+cB)=D(A)$ is $m$--accretive and any core of $A$ is also a core for $A+cB$. Furthermore, $A-kB$ is essentially $m$--accretive on $D(A)$.
\end{theorem}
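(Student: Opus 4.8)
The plan is to run the Yosida-approximation scheme that underlies all of Okazawa's perturbation results, using the normalized duality map $J\colon L^p\to L^{p'}$, $J(v)=\|v\|_p^{2-p}|v|^{p-2}\bar v$, which satisfies $\langle v,J(v)\rangle=\|v\|_p^2$ and $\|J(v)\|_{p'}=\|v\|_p$; in this notation $T$ is accretive iff $\mathrm{Re}\langle Tu,J(u)\rangle\ge 0$, and the functional appearing in (i)--(ii) is precisely $J(B_\eps u)$. I would recall the standard facts about the Yosida approximation $B_\eps=B(I+\eps B)^{-1}$: it is bounded and $m$-accretive, $J_\eps:=(I+\eps B)^{-1}\to I$ strongly, $\|B_\eps u\|_p\le\|Bu\|_p$ and $B_\eps u\to Bu$ for $u\in D(B)$, and, by reflexivity of $L^p$, $u\in D(B)$ as soon as $\sup_\eps\|B_\eps u\|_p<\infty$.

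First I would extract from (i) the key a priori bound. Pairing $(\lambda+A+cB_\eps)u$ with $J(B_\eps u)$ for arbitrary $u\in D(A)$ and inserting (i) and (ii), the $\lambda$-term is nonnegative by (ii) while the coefficient of $\|B_\eps u\|_p^2$ equals $k_1+c$, which is strictly positive precisely because $c>-k\ge -k_1$. Solving the resulting quadratic inequality in $\|B_\eps u\|_p$ gives $\|B_\eps u\|_p\le C(\|u\|_p+\|(\lambda+A+cB_\eps)u\|_p)$ with $C$ independent of $\eps$ and $\lambda$ (valid first on the core $D$, then on $D(A)$ since $B_\eps$ is bounded and $A$ is graph-continuous); pairing the same expression with $J(u)$, using accretivity of $A$ and the bound just obtained to absorb $c\,\mathrm{Re}\langle B_\eps u,J(u)\rangle$, yields $\|u\|_p\le C'\|(\lambda+A+cB_\eps)u\|_p$ for all $\lambda\ge\lambda_0$, with $\lambda_0,C'$ uniform in $\eps$. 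Letting $\eps\to0$ in $\|B_\eps u\|_p\le\frac1{k_1}\|Au\|_p+C\|u\|_p$ (extended to $D(A)$ by density of $D$ in the graph norm together with closedness of $B$) shows $D(A)\subseteq D(B)$, so $A+cB$ is a genuine operator on $D(A)$.

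Accretivity of $A+cB$ I would deduce from (iii) alone: for $c\ge0$ it is the sum of two accretive operators, while for $-k_2<c<0$ one writes $A+cB=(1+c/k_2)A+(-c/k_2)(A-k_2B)$ as a convex combination of accretive operators; since $c>-k\ge -k_2$ this covers the whole range. It remains to prove $R(\lambda+A+cB)=L^p$ for one large $\lambda$. Here I would solve the approximate equations $(\lambda+A+cB_\eps)u_\eps=f$: since $B_\eps$ is bounded these are solvable for $\lambda$ large depending on $\eps$ (Neumann series off the $m$-accretive $A$), and the uniform estimate $\|u\|_p\le C'\|(\lambda+A+cB_\eps)u\|_p$ upgrades this, via the method of continuity in $\lambda$, to solvability for every $\lambda\ge\lambda_0$ with $\lambda_0$ independent of $\eps$. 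The uniform bounds on $u_\eps$, $Au_\eps$ and $B_\eps u_\eps$ then give, along a subsequence, $u_\eps\rightharpoonup u$, $Au_\eps\rightharpoonup Au$ (closedness of $A$) and, after checking $J_\eps u_\eps\rightharpoonup u$, $B_\eps u_\eps\rightharpoonup Bu$ (closedness of $B$); passing to the limit in the equation yields $(\lambda+A+cB)u=f$, so $A+cB$ is $m$-accretive.

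For the core assertion the same inequality is decisive: rewriting (i) in the limit $\eps\to0$ with $Au=(A+cB)u-cBu$ gives $\mathrm{Re}\langle(A+cB)u,J(Bu)\rangle\ge(k_1+c)\|Bu\|_p^2-\tilde c\|u\|_p^2-a\|Bu\|_p\|u\|_p$, and since $k_1+c>0$ this makes $B$ relatively bounded with respect to $A+cB$; hence the graph norms of $A$ and $A+cB$ are equivalent on $D(A)$ and every core of $A$ is a core of $A+cB$. Finally, $A-kB$ at the endpoint $c=-k$ is still accretive (as $-k\ge -k_2$), and I would obtain its essential $m$-accretivity by letting $c\downarrow-k$: the $m$-accretive operators $A+cB$ converge to $A-kB$ in the strong resolvent sense on the common core $D(A)$, and a strong-resolvent limit of $m$-accretive operators is $m$-accretive. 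The main obstacle is the surjectivity step: making the a priori estimate genuinely uniform in $\eps$, running the continuity argument so that a single $\lambda_0$ serves all $\eps$, and correctly identifying the weak limit of $B_\eps u_\eps$ as $Bu$; everything else reduces to the algebra of the two quadratic inequalities, whose positivity hinges on $k_1+c>0$ and on (ii).
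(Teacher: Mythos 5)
This theorem is not proved in the paper at all: it is quoted verbatim from Okazawa \cite[Theorem 1.7]{Oka96}, so there is no in-paper argument to compare yours against. Judged on its own terms, your scheme for the open range $c>-k$ is essentially Okazawa's: the two quadratic inequalities obtained by pairing $(\lambda+A+cB_\eps)u$ with $J(B_\eps u)$ and with $J(u)$, the uniform bound $\sup_\eps\|B_\eps u\|_p<\infty$ forcing $D(A)\subseteq D(B)$ by reflexivity, the convex-combination trick $A+cB=(1+c/k_2)A+(-c/k_2)(A-k_2B)$ for accretivity when $c<0$, the passage to the limit in $(\lambda+A+cB_\eps)u_\eps=f$ using weak closedness of the graphs, and the transfer of cores via equivalence of graph norms (which uses $k_1+c>0$). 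All of this is sound.

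The genuine gap is the last claim, the essential $m$--accretivity of $A-kB$ at the endpoint. Your proposed mechanism --- ``$A+cB\to A-kB$ in the strong resolvent sense, and a strong-resolvent limit of $m$--accretive operators is $m$--accretive'' --- is not justified and, as stated, is not even a usable principle: a strong limit of resolvents is only a pseudo-resolvent, and identifying it as the resolvent of $\overline{A-kB}$ (rather than of some larger accretive extension, or of nothing at all if the limit fails to be injective or to have dense range) is precisely the content of the endpoint assertion. Worse, the quantitative input you would need degenerates exactly there: in the critical case $k=k_1$ your own estimate gives only $(k_1+c)\|Bu_c\|_p\le C\|f\|_p$ for $u_c=(1+A+cB)^{-1}f$, so $\|Bu_c\|_p$ may blow up like $(k_1+c)^{-1}$ as $c\downarrow -k_1$, and neither the convergence of $u_c$ nor the smallness of $(c+k)Bu_c$ (which is what you need to show that $R(1+A-kB)$ is dense) follows. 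Okazawa's treatment of the endpoint requires a separate, more delicate argument; your outline does not contain it, so this part of the statement remains unproved in your proposal.
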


In order to apply the above theorem, we need some preliminary results on the operator $L$ and some Hardy's inequalities.


\section{Preliminary results}
Let us begin with the generation results for suitable realizations $L_p$ of the operator $L$ in $L^p(\R^N),\,1<p<\infty$. Such results have been proved in \cite{FL,LR,Met_Spi}.
More specifically,
the case $\alpha\le 2$ has been investigated in \cite{FL} for $1<\alpha \le 2$ and in \cite{LR} for $\alpha \le 1$, where the authors proved the following result.
\begin{theorem}\label{thm-luca-simona}
If $\alpha \in [0,2]$ then, for any $p \in(1,+\infty)$, the realization $L_p$ of $L$ with domain
$$
D_p=\{u\in W^{2,p}(\R^N) : |x|^\alpha |D^2 u|, |x|^{\alpha/2}|\nabla u|\in L^p(\R^N) \}
$$
generates a positive and strongly continuous analytic semigroup. Moreover $C_c^\infty(\R^N)$ is a core for $L_p$.
\end{theorem}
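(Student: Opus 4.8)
The plan is to construct $L_p$ via the Lumer--Phillips theorem and to identify its domain through a weighted second--order a priori estimate. Accordingly I would carry out four steps: first, prove that $L-\omega$ is (quasi--)dissipative on $C_c^\infty(\R^N)$ for a suitable $\omega\ge0$; second, establish the range condition $R(\lambda-L)=L^p(\R^N)$ for large $\lambda$, so that the closure of $(L,C_c^\infty(\R^N))$ generates a quasi--contractive semigroup; third, prove the weighted Calder\'on--Zygmund estimate
\begin{equation*}
\big\||x|^\alpha|D^2u|\big\|_p+\big\||x|^{\alpha/2}|\nabla u|\big\|_p\le C\big(\|Lu\|_p+\|u\|_p\big),\qquad u\in C_c^\infty(\R^N),
\end{equation*}
which forces the domain of the generator to coincide with $D_p$ and makes $C_c^\infty(\R^N)$ a core; fourth, upgrade to analyticity and positivity.

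For the first step I would test $Lu$ against the duality map $|u|^{p-2}u$ and take real parts (the complex case reducing to the real one). For real $u\in C_c^\infty(\R^N)$, integrating by parts and using $\nabla(|x|^\alpha)=\alpha|x|^{\alpha-2}x$ together with $\operatorname{div}(|x|^{\alpha-2}x)=(N+\alpha-2)|x|^{\alpha-2}$ yields the identity
\begin{equation*}
\langle Lu,|u|^{p-2}u\rangle=-(p-1)\int_{\R^N}(1+|x|^\alpha)|u|^{p-2}|\nabla u|^2\,dx+\frac{\alpha(N+\alpha-2)}{p}\int_{\R^N}|x|^{\alpha-2}|u|^p\,dx .
\end{equation*}
The first term is nonpositive; the task is to control the second. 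Writing $v=|u|^{p/2}$, the gradient term dominates a multiple of $\int|x|^\alpha|\nabla v|^2$, while the remaining term is a multiple of $\int|x|^{\alpha-2}v^2$, so the weighted Hardy inequality $\int|x|^{\alpha-2}v^2\le\frac{4}{(N+\alpha-2)^2}\int|x|^\alpha|\nabla v|^2$ (valid for $N+\alpha\ne2$) lets me absorb it; splitting the integral near and far from the origin then gives dissipativity up to a term $\omega\|u\|_p^p$. Here the hypothesis $\alpha\le2$ is exactly what keeps the singular weight $|x|^{\alpha-2}$ controllable and the coefficient growth subcritical.

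The main obstacle is the second step, because $1+|x|^\alpha$ is unbounded, $L$ is not uniformly elliptic, and global elliptic regularity is unavailable off the shelf. I would solve $\lambda u_n-Lu_n=f$ on balls $B_n$ with Neumann conditions, where $L$ is uniformly elliptic and classical theory applies, derive bounds uniform in $n$ from the dissipativity estimate and from the weighted estimate above, and pass to the limit; the weighted bounds are what guarantee that the limit lies in $D_p$ and not merely in $W^{2,p}_{loc}(\R^N)$. Proving the weighted Calder\'on--Zygmund estimate is itself the technical core: I would localize on dyadic annuli $\{2^k\le|x|\le2^{k+1}\}$, rescale $x=2^ky$ so that on the fixed annulus $\{1\le|y|\le2\}$ the operator becomes uniformly elliptic with constants independent of $k$, apply the classical interior Calder\'on--Zygmund inequality there, and sum over $k$ after restoring the weight $|x|^\alpha\approx2^{k\alpha}$; the condition $\alpha\le2$ ensures that the resulting geometric factors $2^{k(\alpha-2)}$ do not blow up and that the sum converges.

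Finally, analyticity would follow from sectorial resolvent estimates obtained by testing against the complex multiplier $|u|^{p-2}\overline u$ and bounding the imaginary part by the real part, and positivity from the fact that each approximating problem on $B_n$ generates a positive semigroup by the maximum principle, a property that persists under the strong limit. Density of $C_c^\infty(\R^N)$ in $D_p$ for the graph norm, which is a byproduct of the approximation and the weighted estimate, then yields that $C_c^\infty(\R^N)$ is a core for $L_p$.
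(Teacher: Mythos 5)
This statement is Theorem~\ref{thm-luca-simona}, which the paper does not prove: it is quoted verbatim as a known result, with the proof delegated to \cite{FL} (for $1<\alpha\le 2$) and \cite{LR} (for $\alpha\le 1$). So there is no internal proof to compare you against; what can be said is that your four-step program is essentially the standard one used in those references and in \cite{Met_Spi} (quasi-dissipativity on test functions, range condition by approximation, weighted Calder\'on--Zygmund estimates by dyadic rescaling, sectoriality from a numerical-range bound), and as a strategy it is sound. Your integration-by-parts identity is correct, and the rescaling heuristic for the weighted estimate correctly isolates $\alpha\le 2$ as the condition making the factors $2^{k(\alpha-2)}$ harmless.

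Two points deserve more care. First, in your dissipativity step, absorbing $\frac{\alpha(N+\alpha-2)}{p}\int|x|^{\alpha-2}|u|^p\,dx$ directly by the weighted Hardy inequality against the term $(p-1)\int|x|^{\alpha}|u|^{p-2}|\nabla u|^2\,dx$ works only under the extraneous constraint $\alpha\le(N-2)(p-1)$ (this is exactly the sharp condition for genuine dissipativity identified in Proposition~\ref{prelim1} and the remark following it); for general $p\in(1,\infty)$ you must rely on the near/far splitting you mention, using $|x|^{\alpha-2}\le\delta^{\alpha}|x|^{-2}$ on $\{|x|\le\delta\}$ together with the \emph{unweighted} Hardy inequality and the gradient term coming from the constant part of $1+|x|^\alpha$ — note this argument degenerates as written only for $\alpha=0$, where the drift term is absent anyway. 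You should also acknowledge the usual $1<p<2$ regularization of $|u|^{p-2}u$ (the paper itself inserts $(|u|^2+\delta)^{(p-2)/2}$ for this). Second, the step ``the a priori estimate forces $D(L_p)=D_p$'' needs to be spelled out: the estimate plus quasi-dissipativity shows $\lambda-L$ is injective with closed range on $D_p$, and surjectivity must come from the limit of the problems on balls, with the uniform weighted bounds guaranteeing that the limit lies in $D_p$; this is where the real work of \cite{FL,LR} sits, and your outline, while pointing in the right direction, leaves the convergence and covering arguments entirely schematic.
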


The case $\alpha >2$ is more involved and is studied in \cite{Met_Spi}, where the following facts are established.
\begin{theorem}\label{thm-giorgio-chiara}
Assume that $\alpha >2$.
\begin{itemize}
\item[1.] If $N=1,2$, no realization of $L$ in $L^p(\R^N)$ generates a strongly continuous (resp. analytic) semigroup.
\item[2.] The same happens if $N\ge 3$ and $p\le N/(N-2)$.
\item[3.] If $N\ge 3$, $p > N/(N-2)$ and $2 <\alpha\le(p-1)(N -2)$, then the maximal realization $L_p$ of the operator $L$ in $L^p(\R^N)$ with the maximal domain
$$D_{\rm max}=\{ u\in W^{2,p}(\R^N) : (1+|x|^\alpha) \Delta u\in L^p(\R^N)\}$$
generates a positive $C_0$-semigroup of contractions, which is also analytic if $\alpha< (p - 1)(N - 2)$.
%
\item[4.] If $N\ge 3$, $p > N/(N-2)$ and $2<\alpha<\frac{N(p-1)}{p}$ the domain $D_{\rm max}$ coincides with the space
$$
\widehat{D_p}=\{u\in W^{2,p}(\R^N) : |x|^{\alpha-2}u, |x|^{\alpha-1}|\nabla u|, |x|^{\alpha}|D^2 u|\in L^p(\R^N)\}.
$$
Moreover, $C_c^\infty(\R^N)$ is a core for $L$.
\end{itemize}
\end{theorem}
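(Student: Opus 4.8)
The plan is to derive the generation statements (items~3 and~4) from the Lumer--Phillips theorem, so that they rest on a sharp dissipativity estimate together with the solvability of the resolvent equation, and to derive the negative statements (items~1 and~2) from an obstruction at infinity detected through a one--dimensional radial reduction.

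First I would prove dissipativity of the maximal realization. For $u\in C_c^\infty(\R^N\setminus\{0\})$, integrating by parts in ${\mathcal Re}\,\langle Lu,|u|^{p-2}\bar u\rangle$ produces the principal negative term $-(p-1)\int_{\R^N}(1+|x|^\alpha)|u|^{p-2}|\nabla u|^2\,dx$, while the factor $\nabla(|x|^\alpha)=\alpha|x|^{\alpha-2}x$ yields, after a further integration by parts using $\mathrm{div}(|x|^{\alpha-2}x)=(N+\alpha-2)|x|^{\alpha-2}$, a positive term proportional to $\int_{\R^N}|x|^{\alpha-2}|u|^p\,dx$. Setting $v=|u|^{p/2}$ and invoking the weighted Hardy inequality
\[
\Bigl(\tfrac{N+\alpha-2}{2}\Bigr)^2\int_{\R^N}|x|^{\alpha-2}v^2\,dx\le\int_{\R^N}|x|^{\alpha}|\nabla v|^2\,dx,
\]
the positive term is absorbed into the negative one exactly when $\alpha\le(p-1)(N-2)$, which is the hypothesis of item~3; a strict inequality leaves a coercive remainder, which is precisely what is needed for the sectorial estimate yielding analyticity.

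Next I would establish the range condition $R(\lambda-L_p)=L^p(\R^N)$ for $\lambda>0$. I would approximate $L$ by nondegenerate operators with bounded coefficients---e.g.\ replacing $1+|x|^\alpha$ by $1+(|x|\wedge n)^\alpha$, or solving on balls $B_n$ with Dirichlet conditions---for which classical theory produces solutions $u_n$ of $\lambda u_n-Lu_n=f$. Applying the dissipativity computation uniformly in $n$ bounds $u_n$ and $|x|^{\alpha/2}|\nabla u_n|$ in $L^p$, enough to pass to the limit and obtain $u\in D_{\rm max}$; the same a priori bounds, together with a cut--off and mollification argument controlling the commutators of $L$ with the cut--offs, show that $C_c^\infty(\R^N)$ is a core. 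For the domain identification in item~4 I would prove weighted Calder\'on--Zygmund estimates, bounding successively $|x|^{\alpha-2}u$, $|x|^{\alpha-1}|\nabla u|$ and $|x|^{\alpha}|D^2u|$ in $L^p$ starting from $Lu\in L^p$; away from the origin these follow from rescaled interior estimates for $\Delta$ summed over dyadic annuli, and the restriction $\alpha<N(p-1)/p$ is exactly the borderline guaranteeing $|x|^{\alpha-2}u\in L^p$ when $u$ decays like $|x|^{2-N}$ at infinity, which makes the dyadic pieces summable.

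Finally, for the non--generation statements I would use that a realization $A$ with $L_{\min}\subseteq A\subseteq L_{\max}$ can generate a $C_0$-semigroup only if a right half--plane lies in its resolvent set, forcing $R(\lambda-L_{\max})=L^p(\R^N)$, i.e.\ $\ker(\bar\lambda-L_{\max}^*)=\{0\}$ in $L^{p'}(\R^N)$, for all such $\lambda$. It therefore suffices to exhibit, for every $\lambda$ with large real part, a nonzero $L^{p'}$ solution of the adjoint equation $\bar\lambda v=\Delta\bigl((1+|x|^\alpha)v\bigr)$. Passing to polar coordinates and projecting onto spherical harmonics reduces both $\lambda u=Lu$ and its adjoint to radial equations of the form
\[
(1+r^\alpha)\Bigl(f''+\tfrac{N-1}{r}f'-\tfrac{k(k+N-2)}{r^2}f\Bigr)=\lambda f,\qquad k=0,1,2,\dots;
\]
since $\alpha>2$ the coefficient $\lambda(1+r^\alpha)^{-1}$ is short range at infinity, so the adjoint solutions are asymptotic to $|x|^{-\alpha}$ and $|x|^{2-N-\alpha}$ at infinity and to $1$ and $|x|^{2-N}$ at the origin. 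Comparing these rates with $L^{p'}$ integrability shows that in the regimes $N\le2$, or $N\ge3$ and $p\le N/(N-2)$, a global $L^{p'}$ solution always exists, which rules out generation by any realization. I expect the \emph{main obstacle} to be precisely this last step: the case--by--case matching of the $L^p$-- and $L^{p'}$--integrability of the two radial asymptotics at the origin and at infinity, together with the uniform weighted a priori estimates needed to justify the limiting procedure in the range condition. By contrast the dissipativity estimate is an immediate consequence of the weighted Hardy inequality.
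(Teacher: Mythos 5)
You should first be aware that the paper does not prove this statement: Theorem \ref{thm-giorgio-chiara} is imported verbatim from \cite{Met_Spi}, so there is no in-paper proof to compare against. Judged on its own merits, your outline does reconstruct the main lines of the argument actually used in \cite{Met_Spi}: dissipativity of $L$ on $C_c^\infty$ via the weighted Hardy inequality with the sharp constant $\gamma_\alpha=((N+\alpha-2)/p)^2$ (your absorption computation gives exactly the threshold $\alpha\le(p-1)(N-2)$, consistent with Proposition \ref{prelim1} and the optimality remark quoting \cite[Proposition 8.2]{Met_Spi}); the range condition by approximation plus uniform a priori bounds; the identification $D_{\rm max}=\widehat{D_p}$ by rescaled interior estimates summed over dyadic annuli, with $\alpha<N(p-1)/p$ as the summability threshold; and the negative results by a radial/duality analysis. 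Your asymptotics for the adjoint solutions ($v\sim|x|^{-\alpha}$ and $|x|^{2-N-\alpha}$ at infinity after writing $w=(1+|x|^\alpha)v$) are correct, and the regular-at-the-origin branch does produce a nonzero element of the adjoint kernel in $L^{p'}$ precisely when $p'\ge N/2$ or $N\le 2$, since then $\alpha p'>N$ follows from $\alpha>2$. So the skeleton is sound.

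That said, three points are genuine gaps rather than routine details. First, analyticity in item 3 does not follow from "a coercive remainder" in the real part alone: one must bound $|{\mathcal Im}\langle Lu,|u|^{p-2}u\rangle|$ by $-{\mathcal Re}\langle Lu,|u|^{p-2}u\rangle$ (this is \cite[Theorem 8.1]{Met_Spi}, alluded to in the proof of Theorem \ref{th1_2bis}), and the degeneration of that bound explains why analyticity is lost exactly at $\alpha=(p-1)(N-2)$; your sketch never produces the imaginary-part estimate. Second, positivity of the semigroup in item 3 is claimed in the statement but not addressed; one needs a dispersivity argument as in Proposition \ref{dispersivity}. Third, in the non-generation argument the pairing $\langle \lambda u-Lu,v\rangle=\langle u,\lambda v-L^*v\rangle$ must be justified for every $u$ in the domain of an arbitrary realization by a cut-off argument, and the commutator error over the annulus $R\le|x|\le 2R$ is of order $R^{N/p'-2}$; this tends to zero only for $p'>N/2$, so the endpoint case $p=N/(N-2)$ (included in item 2) is exactly where your argument as written does not close and a finer analysis is required. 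These are the places where the actual work of \cite{Met_Spi} lies.
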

If we consider the operator $\tilde{L}:=L-\eta |x|^\beta$ with $\eta >0$ and $\beta >\alpha -2$ then we can drop the above conditions on $p,\,\alpha$ and $N$, as the following result shows, see \cite{CRT}, where the quasi-contractivity can be deduced from the proof of Theorem 4.5 in \cite{CRT}.
\begin{theorem}\label{thm:CRT}
Assume $N\ge 3$. If $\alpha >2$ and $\beta >\alpha -2$ then, for any $p\in (1,\infty)$, the realization $\tilde{L}_p$ of $\tilde{L}$ with domain
$$\widetilde{D_p}=\{u\in W^{2,p}(\R^N) : |x|^{\beta}u, |x|^{\alpha-1}|\nabla u|, |x|^{\alpha}|D^2 u|\in L^p(\R^N)\}$$
generates a positive and strongly continuous quasi-contractive analytic semigroup. Moreover, $C_c^\infty(\R^N)$ is a core for $\tilde{L}_p$.
\end{theorem}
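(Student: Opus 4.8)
The plan is to work first on the test functions $C_c^\infty(\R^N)$, where I can compute explicitly, and to establish two analytic ingredients separately: a quasi-dissipativity estimate that gives both the quasi-contractivity and (via the imaginary part) the sectoriality, and a family of weighted a priori estimates that pin down the domain as $\widetilde{D_p}$; then I would pass to the closure by a Lumer--Phillips/range argument. For the dissipativity I would test $\tilde L u$ against $|u|^{p-2}\bar u$ and integrate by parts the diffusion term, using $\nabla(1+|x|^\alpha)=\alpha|x|^{\alpha-2}x$. Taking real parts this yields, for all $u\in C_c^\infty(\R^N)$,
\[
\mathrm{Re}\int_{\R^N}\tilde L u\,|u|^{p-2}\bar u\,dx
\le -(p-1)\int_{\R^N}(1+|x|^\alpha)|\nabla u|^2|u|^{p-2}\,dx
+\alpha\int_{\R^N}|x|^{\alpha-1}|\nabla u|\,|u|^{p-1}\,dx
-\eta\int_{\R^N}|x|^\beta|u|^p\,dx .
\]
The first-order (drift) term is the only dangerous one. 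By Young's inequality it is bounded by $\delta\int(1+|x|^\alpha)|\nabla u|^2|u|^{p-2}+C_\delta\int|x|^{\alpha-2}|u|^p$; the gradient part is absorbed into the coercive term for $\delta$ small, and this is exactly where the hypothesis $\beta>\alpha-2$ enters: since $|x|^{\alpha-2}\le\eps|x|^{\beta}+C_\eps$ for every $\eps>0$, the leftover $\int|x|^{\alpha-2}|u|^p$ is dominated by $\eta\int|x|^\beta|u|^p$ up to a multiple of $\|u\|_p^p$. Hence $\tilde L-\omega$ is dissipative for a suitable $\omega\ge 0$, giving the quasi-contractivity bound.

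Next I would prove the weighted elliptic estimate
\[
\bigl\||x|^\beta u\bigr\|_p+\bigl\||x|^{\alpha-1}\nabla u\bigr\|_p+\bigl\||x|^\alpha D^2u\bigr\|_p
\le C\bigl(\|\tilde L u\|_p+\|u\|_p\bigr),\qquad u\in C_c^\infty(\R^N),
\]
which I expect to be the main obstacle. The natural tool is a dyadic rescaling argument: on each annulus $2^k\le|x|\le 2^{k+1}$ one freezes the coefficient $(1+|x|^\alpha)\approx 2^{k\alpha}$, rescales to unit size, applies the standard Calder\'on--Zygmund estimate for $\Delta$ on a fixed reference annulus, and then sums the rescaled contributions; the growth rates of the three weights are mutually compatible precisely because $\beta>\alpha-2$ controls the first-order weight against the potential. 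The region near the origin, where the coefficient is comparable to $1$, is handled by ordinary interior $L^p$ regularity. This estimate shows that the graph norm of $\tilde L$ and the $\widetilde{D_p}$-norm are equivalent on $C_c^\infty(\R^N)$.

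With dissipativity and the a priori bounds in hand I would close the argument. For the range condition I would approximate $\tilde L$ by operators with truncated (bounded, nondegenerate) coefficients, for which generation and surjectivity of $\lambda-\tilde L$ are classical; the uniform weighted estimates provide the compactness needed to pass to the limit and to show that $\tilde L_p$ with domain $\widetilde{D_p}$ satisfies $R(\lambda-\tilde L_p)=L^p(\R^N)$ for $\lambda>\omega$, so that $\tilde L_p$ is $m$-quasi-dissipative and Lumer--Phillips yields the quasi-contractive $C_0$-semigroup; the same a priori estimate combined with cutoff truncations gives that $C_c^\infty(\R^N)$ is a core. Analyticity follows by estimating the imaginary part $\bigl|\mathrm{Im}\int\tilde L u\,|u|^{p-2}\bar u\,dx\bigr|$ by $|p-2|$ times the same coercive gradient quantity that controls the real part, which confines the numerical range to a sector and gives quasi-$m$-sectoriality. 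Finally, positivity preservation is inherited from the truncated approximants, whose semigroups are positive because the principal part is a positive multiple of $\Delta$ and the potential is real, and positivity is stable under the strong limit of the resolvents (equivalently, one checks the positive minimum principle on $(\lambda-\tilde L_p)^{-1}$).
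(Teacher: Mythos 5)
First, a point of comparison: the paper does not actually prove Theorem \ref{thm:CRT} --- it is imported from \cite{CRT}, with the quasi-contractivity read off from the proof of Theorem 4.5 there. So there is no in-paper argument to match your proposal against; what can be said is that your quasi-dissipativity computation (integrate by parts, absorb the drift $\alpha|x|^{\alpha-2}x\cdot\nabla u$ by Young, and dominate the leftover $\int|x|^{\alpha-2}|u|^p$ by $\eta\int|x|^{\beta}|u|^p$ using $\beta>\alpha-2$) is correct and is exactly what the paper itself carries out in Proposition \ref{p:tilde}(i). The positivity and core arguments you sketch are also standard and workable.

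The genuine gap is in the weighted a priori estimate, which you correctly identify as the main obstacle but then dispatch too quickly. The dyadic freezing/Calder\'on--Zygmund argument on annuli $|x|\sim 2^k$ yields, after rescaling, $\||x|^{\alpha}D^2u\|_p\le C\bigl(\|(1+|x|^{\alpha})\Delta u\|_p+\||x|^{\alpha-2}u\|_p\bigr)$; since $(1+|x|^{\alpha})\Delta u=\tilde Lu+\eta|x|^{\beta}u$ and $\alpha-2<\beta$, the whole estimate therefore reduces to the zeroth-order bound $\||x|^{\beta}u\|_p\le C(\|\tilde Lu\|_p+\|u\|_p)$ --- and this bound is \emph{not} produced by the rescaling argument, nor by your dissipativity identity, which only controls $\int_{\R^N}|x|^{\beta}|u|^p\,dx$, i.e.\ the much weaker weight $|x|^{\beta/p}$. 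This zeroth-order potential estimate is the real crux of \cite{CRT}: it requires a separate weighted multiplier argument (testing $\tilde Lu$ against something like $|x|^{\beta(p-1)}|u|^{p-2}\bar u$, or equivalently an Okazawa-type domination of the multiplication operator $|x|^{\beta}$ by $-\tilde L$), in which $\beta>\alpha-2$ is used a second time to absorb the commutator terms. Without it your scheme is circular. Two smaller points: the imaginary part of $\langle \tilde Lu,|u|^{p-2}\bar u\rangle$ receives a contribution from the drift term as well, not only the $|p-2|$ term coming from the Laplacian, and that contribution must also be absorbed into the coercive quantities before you can conclude sectoriality; and the truncation/limit argument for the range condition needs the uniform weighted estimates to hold for the truncated operators with constants independent of the truncation, which should be checked rather than asserted.
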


From now on we assume $N\ge 3,\,\alpha \ge 0$. We set
\begin{equation}       \label{hardy_const}
\gamma_\alpha=\left(\frac{N+\alpha-2}{p}\right)^2
\end{equation}
and recall the following Hardy's inequality. For a proof we refer to \cite[Lemma 2.2 \& Lemma 2.3]{Oka96} for the case $\alpha=0$ and \cite[Appendix]{Met_Spi} for $\alpha \ge 2$. Here we give a simple proof based on the method of vector fields introduced by Mitidieri in \cite{Mi}, which holds for any $\alpha \ge0$.

\begin{lemma}       \label{hardy-p}
For every $u\in W^{1,p}(\R^N)$ with compact support, one has
\begin{equation}         \label{hardy-p-formula2}
\gamma_\alpha \int_{\R^N}\frac{|u|^p}{|x|^2}|x|^\alpha dx \le  \int_{\R^N} |\nabla u|^{2}\, |u|^{p-2}|x|^\alpha dx.
\end{equation}
The inequality holds true even if $u$ is replaced by $|u|$.
\end{lemma}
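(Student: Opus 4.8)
The plan is to use Mitidieri's vector-field method with the explicit choice $F(x)=x\,|x|^{\alpha-2}$. Since $|\nabla|u||\le|\nabla u|$ almost everywhere, it suffices to prove the stronger inequality with $|u|$ in place of $u$ on both sides; I would therefore set $v=|u|\ge 0$ and, after a standard density argument, first assume that $v$ is smooth with compact support (regularizing $v$ by $(v^2+\eps)^{1/2}$ to tame the factor $v^{p-2}$ when $p<2$, and passing to the limit only at the end).

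The starting identity comes from the divergence theorem. A direct computation gives $\operatorname{div}F=(N+\alpha-2)|x|^{\alpha-2}$, so that multiplying by $v^p$ and integrating by parts yields
\[
(N+\alpha-2)\int_{\R^N}\frac{v^p}{|x|^2}\,|x|^\alpha\,dx=-p\int_{\R^N}v^{p-1}\,|x|^{\alpha-2}\,(x\cdot\nabla v)\,dx.
\]
To justify this in the presence of the singularity of $F$ at the origin, I would integrate over $\R^N\setminus B_\eps$ and let $\eps\to0$: the boundary term on $\partial B_\eps$ is controlled by $\eps^{N+\alpha-2}$, which tends to $0$ precisely because $N\ge3$ and $\alpha\ge0$ force $N+\alpha-2>0$. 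Bounding $|x\cdot\nabla v|\le|x|\,|\nabla v|$ then gives
\[
(N+\alpha-2)\int_{\R^N}\frac{v^p}{|x|^2}\,|x|^\alpha\,dx\le p\int_{\R^N}|\nabla v|\,v^{p-1}\,|x|^{\alpha-1}\,dx.
\]

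It remains to absorb the right-hand side. I would write its integrand as the product of $|\nabla v|\,v^{(p-2)/2}|x|^{\alpha/2}$ and $v^{p/2}|x|^{\alpha/2-1}$ and apply Young's inequality $ab\le\frac{\delta}{2}a^2+\frac{1}{2\delta}b^2$. Denoting $I=\int_{\R^N}\frac{v^p}{|x|^2}|x|^\alpha\,dx$ and $J=\int_{\R^N}|\nabla v|^2\,v^{p-2}|x|^\alpha\,dx$, this produces
\[
(N+\alpha-2)\,I\le\frac{p\delta}{2}\,J+\frac{p}{2\delta}\,I,
\]
whence $\bigl(N+\alpha-2-\tfrac{p}{2\delta}\bigr)I\le\tfrac{p\delta}{2}J$. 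Optimizing the resulting constant over $\delta>0$, with the maximum attained at $\delta=p/(N+\alpha-2)$, yields exactly the coefficient $\gamma_\alpha=\bigl(\tfrac{N+\alpha-2}{p}\bigr)^2$, giving $\gamma_\alpha\,I\le J$. This is the claim for $v=|u|$, and since $|\nabla|u||\le|\nabla u|$ the inequality with $u$ follows at once.

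The only genuinely delicate point is the integration by parts: both the singular weight at the origin and, for $p<2$, the factor $v^{p-2}$ must be handled by a double regularization (excising $B_\eps$ and smoothing $v$), with the verification that all error and boundary terms vanish in the limit. Everything else — the divergence computation and the Young optimization — is elementary, and the sharpness of $\gamma_\alpha$ is built into the optimal choice of $\delta$.
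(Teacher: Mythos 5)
Your proof is correct and follows essentially the same route as the paper: Mitidieri's vector-field method with $F(x)=x|x|^{\alpha-2}$, integration by parts, Cauchy--Schwarz/Young, and an optimization that produces $\gamma_\alpha$ (the paper places the free parameter as a scalar multiple $\lambda$ of the vector field and optimizes over $\lambda$, while you place it in Young's inequality as $\delta$ --- these are equivalent). Your extra care with the boundary term on $\partial B_\eps$ and the regularization for $1<p<2$ is sound; the paper handles the latter by citing an integration-by-parts formula in Sobolev spaces.
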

\begin{proof}
By density, it suffices to prove \eqref{hardy-p-formula2} for $u\in C_c^1(\R^N)$. So, for every $\lambda\ge 0$, let us consider the vector field $F(x)=\lambda \frac{x}{|x|^2}|x|^\alpha,\,x\neq 0,$ and set $d\mu(x)=|x|^\alpha dx$.
Integrating by parts and applying H\"older and Young's inequalities we get
\begin{align*}
\int_{\R^N}|u|^p {\rm div}F\,dx &= \lambda(N-2+\alpha)\int_{\R^N}\frac{|u|^p}{|x|^2}\,d\mu \\
&= -p\lambda \int_{\R^N}|u|^{p-2} {\mathcal Re }(u\nabla \bar u)\cdot \frac{x}{|x|^2}\,d\mu \\
&\le p\lambda \left(\int_{\R^N} |\nabla u|^{2}\, |u|^{p-2}\,d\mu \right)^{\frac{1}{2}}\left(\int_{\R^N}\frac{|u|^p}{|x|^2}\,d\mu \right)^{\frac{1}{2}}\\
&\le \int_{\R^N} |\nabla u|^{2}\, |u|^{p-2}\,d\mu +\frac {\lambda^2p^2}{4}\int_{\R^N}\frac{|u|^p}{|x|^2}\,d\mu.
\end{align*}
In the computations above, we used the identity $\nabla |u|^p=p|u|^{p-2}{\mathcal Re}(u\nabla\bar u)$.
Hence,
$$\left[\lambda (N-2+\alpha)-\frac {\lambda^2p^2}{4}\right]\int_{\R^N}\frac{|u|^p}{|x|^2}|x|^\alpha \,dx \le \int_{\R^N} |\nabla u|^{2}\, |u|^{p-2}|x|^\alpha \,dx .$$
By taking the maximum over $\lambda$ of the function $\psi(\lambda)=\lambda(N-2+\alpha)-\lambda^2p^2/4$, we get \eqref{hardy-p-formula2}.

We note here that the integration by parts is straightforward when $p\ge 2$. For $1<p<2$, $|u|^{p-2}$ becomes singular near the zeros of $u$. Also in this case the integration by parts is allowed, see \cite{MS}.

By using the identity $\nabla | u|^p=p|u|^{p-1}\nabla |u|$ in the computations above, the statement holds with $u$ replaced by $|u|$.
\end{proof}

\begin{remark}         \label{hardy-best-const}
{\rm The constant $\gamma_\alpha$ in  \eqref{hardy-p-formula2} is optimal, as shown in \cite[Appendix]{Met_Spi}}.
\end{remark}
\begin{remark}         \label{hardy-cor}
{\rm Hardy's inequality \eqref{hardy-p-formula2} holds even if $u$ is replaced by $u_+:=\sup(u,0)$, since  $u_+\in W^{1,p}(\R^N)$, whenever  $u\in W^{1,p}(\R^N)$ (cf. \cite[Lemma 7.6]{GT})}.
\end{remark}

As a consequence of Lemma \ref{hardy-p} we have the following results.
\begin{proposition}       \label{prelim1}
Assume $ \alpha \le (N-2)(p-1)$. Let $V\in L^p_{\rm loc}(\R^N\setminus \{0\})$. If $V(x)\le \frac{c}{|x|^2}$, $x\neq 0$, with $c\le (p-1)\gamma_0$, then $L+V$ with domain $C_c^\infty(\R^N)$ is dissipative in $L^p(\R^N)$.
\end{proposition}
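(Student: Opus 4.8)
The plan is to verify dissipativity directly: since $\|u\|_p^{2-p}|u|^{p-2}\bar u$ is a duality mapping of $u\in L^p(\R^N)$, it suffices to prove that
\[
{\mathcal Re}\int_{\R^N}(L+V)u\,|u|^{p-2}\bar u\,dx\le 0\qquad\text{for all }u\in C_c^\infty(\R^N).
\]
Writing $(L+V)u=(1+|x|^\alpha)\Delta u+Vu$ and noting that ${\mathcal Re}\big(Vu\,|u|^{p-2}\bar u\big)=V|u|^p$ with $V$ real, the potential contributes $\int_{\R^N}V|u|^p\,dx\le c\int_{\R^N}|x|^{-2}|u|^p\,dx$, using $V\le c|x|^{-2}$ and $|u|^p\ge 0$; this upper bound is finite because $N\ge 3$. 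So everything reduces to controlling the principal part.

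First I would integrate by parts and expand, using the identities $\nabla|u|^p=p|u|^{p-2}{\mathcal Re}(\bar u\nabla u)$ and ${\mathcal Re}(\bar u\nabla u)=|u|\nabla|u|$, together with ${\rm div}(|x|^{\alpha-2}x)=(N+\alpha-2)|x|^{\alpha-2}$, to arrive at the identity
\[
{\mathcal Re}\int_{\R^N}(1+|x|^\alpha)\Delta u\,|u|^{p-2}\bar u\,dx=\frac{\alpha(N+\alpha-2)}{p}\int_{\R^N}\frac{|u|^p}{|x|^2}|x|^\alpha\,dx-\int_{\R^N}(1+|x|^\alpha)\big[(p-2)|u|^{p-2}|\nabla|u||^2+|u|^{p-2}|\nabla u|^2\big]\,dx.
\]
The first term on the right comes from differentiating the weight $|x|^\alpha$ (after a second integration by parts), while the bracketed term is the usual one obtained when differentiating $|u|^{p-2}\bar u$.

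Next I would discard the good gradient terms via the pointwise inequality $(p-2)|\nabla|u||^2+|\nabla u|^2\ge(p-1)|\nabla|u||^2$, valid for every $p\in(1,\infty)$ (distinguishing $p\ge2$ and $1<p<2$ but using $|\nabla|u||\le|\nabla u|$ in both cases), and then apply Lemma \ref{hardy-p} with $u$ replaced by $|u|$ to each weight separately, namely $\int_{\R^N}|u|^{p-2}|\nabla|u||^2\,dx\ge\gamma_0\int_{\R^N}|x|^{-2}|u|^p\,dx$ and $\int_{\R^N}|x|^\alpha|u|^{p-2}|\nabla|u||^2\,dx\ge\gamma_\alpha\int_{\R^N}|x|^{-2}|u|^p|x|^\alpha\,dx$. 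Collecting all terms, ${\mathcal Re}\int_{\R^N}(L+V)u\,|u|^{p-2}\bar u\,dx$ is bounded above by
\[
\big[c-(p-1)\gamma_0\big]\int_{\R^N}\frac{|u|^p}{|x|^2}\,dx+\frac{N+\alpha-2}{p^2}\big[\alpha-(p-1)(N-2)\big]\int_{\R^N}\frac{|u|^p}{|x|^2}|x|^\alpha\,dx,
\]
where I used the identity $\frac{\alpha(N+\alpha-2)}{p}-(p-1)\gamma_\alpha=\frac{N+\alpha-2}{p^2}\big(\alpha-(p-1)(N-2)\big)$. Since $N+\alpha-2>0$, the hypotheses $c\le(p-1)\gamma_0$ and $\alpha\le(N-2)(p-1)$ make both bracketed coefficients nonpositive, which is exactly the claimed dissipativity.

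The main obstacle is technical rather than conceptual: the weight $|u|^{p-2}$ appearing in the integrations by parts is singular at the zeros of $u$ when $1<p<2$, so the formal manipulations above require justification there. I would handle this exactly as in the proof of Lemma \ref{hardy-p}, regularizing $|u|$ by $(|u|^2+\eps^2)^{1/2}$ and passing to the limit $\eps\to0$ (or invoking the argument of \cite{MS}); the compact support of $u$ together with $N\ge3$ guarantees that all the singular integrals appearing are finite, so the limit is harmless.
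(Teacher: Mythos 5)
Your proof is correct and follows essentially the same route as the paper: integrate by parts, isolate the drift term coming from $\nabla|x|^\alpha$, and apply the weighted Hardy inequality of Lemma \ref{hardy-p} separately with the weights $1$ and $|x|^\alpha$, recovering the same thresholds $c\le(p-1)\gamma_0$ and $\alpha\le(N-2)(p-1)$. The only (immaterial) difference is that you integrate the drift term by parts a second time to get the exact coefficient $\frac{\alpha(N+\alpha-2)}{p}$, whereas the paper bounds it via H\"older's inequality as $\alpha I_\alpha J_\alpha$ before invoking Hardy; the two estimates coincide at the sharp constant.
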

\begin{proof}
Let $u\in C_c^\infty(\R^N)$. Take $\delta >0$ if $1<p<2$ and $\delta =0$ if $p\ge 2$. Then we have
\begin{align*}
\langle Lu,u(|u|^2+\delta)^{\frac{p-2}{2}}\rangle &= -\int_{\R^N}\nabla u\cdot \nabla\left(\overline{u}
(|u|^2+\delta)^{\frac{p-2}{2}}\right)(1+|x|^\alpha) d x \\
&\quad - \alpha\int_{\R^N}\overline{u}  (|u|^2+\delta)^\frac{p-2}{2}\nabla u\cdot x |x|^{\alpha-2} dx\\
&=- \int_{\R^N}(|u|^2+\delta)^{\frac{p-2}{2}}|\nabla u|^2(1+|x|^\alpha) d x \\
&\quad -(p-2)\int_{\R^N}(|u|^2+\delta)^{\frac{p-4}{2}}(|u|\nabla |u|)\cdot (\overline{u}\nabla u)(1+|x|^\alpha) d x\\
&\quad - \alpha\int_{\R^N}\overline{u}  (|u|^2+\delta)^\frac{p-2}{2}\nabla u\cdot x |x|^{\alpha-2} dx.
\end{align*}
So, using the identities $|\nabla |u||^2\le |\nabla u|^2$ and $|u|\nabla |u|={\mathcal Re}(\overline{u}\nabla u)$, we obtain
\begin{align*}
{\mathcal Re}\langle Lu ,u|u|^{p-2}\rangle& \le  -(p-1)\int_{\R^N}|\nabla |u||^2 |u|^{p-2}(1+|x|^\alpha) d x\\
&\quad - \alpha\int_{\R^N}|u|^{p-1}\nabla |u| \cdot x |x|^{\alpha-2} dx
\end{align*}
if $p\ge 2$. The case $1<p<2$ can be handled similarly.
Thus, by H\"older's inequality we have
\begin{align}\label{estim-L}
{\mathcal Re}\langle (L+V)u,u|u|^{p-2}\rangle &\le  -(p-1)\int_{\R^N}|\nabla |u||^2 |u|^{p-2}(1+|x|^\alpha) d x  +\int_{\R^N}V|u|^p d x \nonumber \\
&\quad+\alpha  \bigg(\int_{\R^N}|\nabla |u||^2 |u|^{p-2}|x|^\alpha dx\bigg)^\frac 1 2 \bigg( \int_{\R^N}\frac{|u|^p}{|x|^2} |x|^\alpha dx\bigg)^\frac{1}{2}.
\end{align}
Set
$$
\begin{array}{ll}
\displaystyle I_\alpha^2=\int_{\R^N}|\nabla |u||^2 |u|^{p-2}|x|^\alpha d x, & \qquad \displaystyle J_\alpha^2=   \int_{\R^N}\frac{|u|^p}{|x|^2} |x|^\alpha dx\\[4mm]
\displaystyle I_0^2=\int_{\R^N}|\nabla |u||^2 |u|^{p-2} d x , & \qquad  \displaystyle J_0^2=\int_{\R^N}\frac{|u|^p}{|x|^2}  dx.
\end{array}
$$
Taking the assumption on $V$ into account we obtain
 \begin{align*}
 {\mathcal Re}\langle (L+V)u ,u|u|^{p-2}\rangle& \le -(p-1)I_0^2-(p-1) I_\alpha^2+ c\,J_0^2+\alpha I_\alpha J_\alpha.
 \end{align*}
 Since $c\le (p-1)\gamma_0$ and Lemma \ref{hardy-p} holds for $\alpha=0$, we have that $-(p-1)I_0^2+c\,J_0^2\le 0$. Now, the inequality
 $$-(p-1)I_\alpha^2+\alpha I_\alpha J_\alpha \le 0$$ holds true if $$-(p-1)+\alpha \gamma_\alpha^{-1/2}\le 0,$$
 thanks again to Lemma \ref{hardy-p}. The latter inequality is equivalent to $\alpha\le (N-2)(p-1)$, which is the assumption.
This ends the proof.
 \end{proof}

\begin{remark}
{\rm The assumption $\alpha \le (N-2)(p-1)$ is optimal for the dissipativity of $L$, as proved in \cite[Proposition 8.2]{Met_Spi}.
}
\end{remark}

Hence, in order to apply Theorem \ref{Oka-th1.7}, we have established
\begin{corollary}\label{diss}
Assume $ \alpha \le (N-2)(p-1)$. Then, the operator $\displaystyle L+\frac{c}{|x|^2}$ with $c\le (p-1)\gamma_0$ and domain $C_c^\infty(\R^N)$ is dissipative in $L^p(\R^N)$.
\end{corollary}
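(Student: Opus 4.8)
The plan is to apply Proposition \ref{prelim1} directly, taking the potential to be $V(x)=c|x|^{-2}$ itself. First I would verify that this particular $V$ satisfies the two structural hypotheses of the proposition. The local integrability $V\in L^p_{\rm loc}(\R^N\setminus\{0\})$ is immediate, since on every compact subset of $\R^N\setminus\{0\}$ the function $|x|^{-2}$ is bounded and hence $p$-integrable there. The pointwise domination $V(x)\le c|x|^{-2}$ for $x\neq 0$ holds trivially, in fact with equality, so this is the borderline case that saturates the bound.

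Second, I would check that the remaining quantitative conditions match verbatim. The smallness condition on the coupling constant, $c\le(p-1)\gamma_0$, is precisely the hypothesis of the corollary, and the range restriction $\alpha\le(N-2)(p-1)$ is likewise assumed. With all the hypotheses of Proposition \ref{prelim1} in place for this choice of $V$, its conclusion states exactly that $L+c|x|^{-2}$ with domain $C_c^\infty(\R^N)$ is dissipative in $L^p(\R^N)$, which is the assertion of the corollary.

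There is essentially no obstacle to overcome: the corollary is merely the specialization of Proposition \ref{prelim1} to the choice $V=c|x|^{-2}$, the extremal case for which the pointwise inequality on $V$ becomes an identity. All of the analytic substance has already been carried out in the proof of the proposition, namely the integration by parts for $u\in C_c^\infty(\R^N)$, the reduction to the integrals $I_\alpha,J_\alpha,I_0,J_0$, and the two applications of Hardy's inequality (Lemma \ref{hardy-p} with $\alpha=0$ to absorb $-(p-1)I_0^2+c\,J_0^2$, and with general $\alpha$ to absorb $-(p-1)I_\alpha^2+\alpha I_\alpha J_\alpha$ under the condition $\alpha\le(N-2)(p-1)$). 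Hence the corollary follows at once, and its only role is to record the dissipativity of $L+c|x|^{-2}$ in the precise form needed to later verify hypothesis (iii) of Theorem \ref{Oka-th1.7}.
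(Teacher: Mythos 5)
Your proposal is correct and coincides with the paper's treatment: Corollary \ref{diss} is stated there without a separate proof precisely because it is the specialization of Proposition \ref{prelim1} to $V=c|x|^{-2}$, and your verification of the hypotheses (local $p$-integrability away from the origin, the trivially saturated pointwise bound, and the conditions $c\le(p-1)\gamma_0$ and $\alpha\le(N-2)(p-1)$) is exactly what is needed. Nothing further is required.
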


Let us recall the definition of dispersivity of an operator. A (real) linear operator $A$ with domain $D(A)$ in $L^p(\R^N)$ is called dispersive if $$\langle Au, u_+^{p-1}\rangle\le 0 \quad \hbox{\ for all }u\in D(A).$$
For more details on dispersive operators we refer to \cite[C-II.1]{Na86}.
\begin{proposition}           \label{dispersivity}
Assume $\alpha \le (N-2)(p-1)$. Then, the operator $L+\frac{c}{|x|^2}$ with $c\le (p-1)\gamma_0$ and domain $C_c^\infty(\R^N)$ is dispersive in $L^p(\R^N)$.
\end{proposition}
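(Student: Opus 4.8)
The plan is to mimic closely the computation carried out in the proof of Proposition \ref{prelim1}, replacing the test function $u|u|^{p-2}$ by $u_+^{p-1}$. Since dispersivity concerns only real-valued functions, we may take $u\in C_c^\infty(\R^N)$ real. For $p\ge 2$ one tests the operator against $u_+^{p-1}$ directly; for $1<p<2$ one first regularizes, e.g.\ using $u\,(u_+^2+\delta)^{(p-2)/2}$ with $\delta>0$ and letting $\delta\to 0$ at the end, exactly as in Proposition \ref{prelim1}.

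The first step is to integrate by parts in $\langle Lu,u_+^{p-1}\rangle$. Writing $\nabla(u_+^{p-1})=(p-1)u_+^{p-2}\nabla u_+$ and using the pointwise identity $\nabla u_+=\nabla u\,\mathds{1}_{\{u>0\}}$ (so that $\nabla u\cdot\nabla u_+=|\nabla u_+|^2$ and $u_+^{p-1}\nabla u=u_+^{p-1}\nabla u_+$), one obtains
\begin{align*}
\langle Lu,u_+^{p-1}\rangle &= -(p-1)\int_{\R^N}u_+^{p-2}|\nabla u_+|^2(1+|x|^\alpha)\,dx \\
&\quad -\alpha\int_{\R^N}u_+^{p-1}\nabla u_+\cdot x\,|x|^{\alpha-2}\,dx.
\end{align*}
Since $u\,u_+^{p-1}=u_+^p$ and $V\le c/|x|^2$, the potential contributes $\int_{\R^N}Vu_+^p\,dx\le c\int_{\R^N}u_+^p|x|^{-2}\,dx$. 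After applying H\"older's inequality to the $\alpha$-term, this yields precisely the inequality \eqref{estim-L} with $|u|$ everywhere replaced by $u_+$.

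The second step is to conclude exactly as before. Setting $I_\alpha^2=\int_{\R^N}u_+^{p-2}|\nabla u_+|^2|x|^\alpha\,dx$, $J_\alpha^2=\int_{\R^N}u_+^p|x|^{-2}|x|^\alpha\,dx$ and the corresponding $I_0,J_0$, the estimate reads
$$\langle (L+V)u,u_+^{p-1}\rangle\le -(p-1)I_0^2-(p-1)I_\alpha^2+c\,J_0^2+\alpha I_\alpha J_\alpha.$$
The crucial point is that Hardy's inequality is available for $u_+$: by Remark \ref{hardy-cor} we may apply Lemma \ref{hardy-p} to $u_+\in W^{1,p}(\R^N)$, both with $\alpha=0$ and with the given $\alpha$. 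The condition $c\le(p-1)\gamma_0$ then gives $-(p-1)I_0^2+c\,J_0^2\le 0$, while $\alpha\le(N-2)(p-1)$, i.e.\ $-(p-1)+\alpha\gamma_\alpha^{-1/2}\le 0$, gives $-(p-1)I_\alpha^2+\alpha I_\alpha J_\alpha\le 0$. Hence $\langle (L+V)u,u_+^{p-1}\rangle\le 0$ and the operator is dispersive.

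The only genuine obstacle is the regularity justification when $1<p<2$: the weight $u_+^{p-2}$ is singular on $\partial\{u>0\}$, so the integration by parts must be argued with care. I expect this to be dealt with by the same $\delta$-regularization and limiting argument employed for the dissipativity, invoking \cite{MS}; no new difficulty should arise beyond what was already settled there.
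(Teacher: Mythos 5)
Your argument is correct and follows essentially the same route as the paper: replace $u$ by $u_+$ in the computation of Proposition \ref{prelim1}, integrate by parts (with $\delta$-regularization for $1<p<2$), and invoke Hardy's inequality for $u_+$ via Remark \ref{hardy-cor} under the same conditions $c\le(p-1)\gamma_0$ and $\alpha\le(N-2)(p-1)$. One small correction: for $1<p<2$ the regularized test function should be $u_+(u_+^2+\delta)^{(p-2)/2}$ (as in the paper), not $u\,(u_+^2+\delta)^{(p-2)/2}$, since the latter equals $u\,\delta^{(p-2)/2}$ on $\{u<0\}$ and blows up as $\delta\to 0^+$ rather than converging to $u_+^{p-1}$.
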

\begin{proof}
Let $u\in C_c^\infty(\R^N)$ be real-valued and fix $\delta>0$. Replacing $u$ by $u_+$ in the proof of Proposition \ref{prelim1} and since $u_+\in W^{1,p}(\R^N)$, we deduce that
\begin{align*}
\langle Lu, u_+(u_+^2+\delta)^{\frac{p-2}{2}}\rangle &=- \int_{\R^N}(u_+^2+\delta)^{\frac{p-2}{2}}|\nabla u_+|^2(1+|x|^\alpha) d x \\
&\quad -(p-2)\int_{\R^N}(u_+^2+\delta)^{\frac{p-4}{2}}u_+^2|\nabla u_+|^2(1+|x|^\alpha) d x\\
&\quad - \alpha\int_{\R^N} u_+ (u_+^2+\delta)^\frac{p-2}{2}\nabla u_+\cdot x |x|^{\alpha-2} dx.
\end{align*}
Then,
\begin{align*}
\langle Lu, u_+(u_+^2+\delta)^{\frac{p-2}{2}}\rangle &\le (1-p)\int_{\R^N}(u_+^2+\delta)^{\frac{p-4}{2}}u_+^2|\nabla u_+|^2(1+|x|^\alpha) d x \\
&\quad -\alpha \int_{\R^N} u_+ (u_+^2+\delta)^\frac{p-2}{2}\nabla u_+\cdot x |x|^{\alpha-2} dx,
\end{align*}
where here we take $\delta=0$ if $p\ge 2$ and $\delta >0$ if $1<p<2$. Thus, letting $\delta \to 0$ if $1<p<2$, and applying H\"older's inequality we obtain
$$
\langle (L+\frac{c}{|x|^2})u, u_+^{p-1}\rangle \le (1-p)I_{0,+}^2+(1-p)I_{\alpha ,+}^2+c J_{0,+}^2+\alpha I_{\alpha ,+}J_{\alpha ,+},$$
where
$$
\begin{array}{ll}
\displaystyle I_{\alpha ,+}^2=\int_{\R^N}|\nabla u_+|^2 u_+^{p-2}|x|^\alpha d x, & \qquad \displaystyle J_{\alpha ,+}^2=   \int_{\R^N}\frac{u_+^p}{|x|^2} |x|^\alpha dx\\[4mm]
\displaystyle I_{0,+}^2=\int_{\R^N}|\nabla u_+|^2 u_+^{p-2} d x , & \qquad  \displaystyle J_{0,+}^2=\int_{\R^N}\frac{u_+^p}{|x|^2}  dx.
\end{array}
$$
As in the proof of Proposition \ref{prelim1}, the assertion follows now by Lemma \ref{hardy-p} and Remark \ref{hardy-cor}.
\end{proof}

The next proposition deals with the operator $L-\eta|x|^\beta$.
\begin{proposition}\label{p:tilde}
Let $V\in L^p_{loc}(\R^N \setminus \{0\})$ with $V\le \frac{c}{|x|^2},\,x\neq 0$ and $c\le (p-1)\gamma_0$. Set $\widetilde{L}=L-\eta|x|^\beta$.
\begin{itemize}
\item[\em (i)] If $\alpha \ge 2$, $\beta >\alpha -2$ and $\eta >0$ then the operator $\widetilde{L}+V$ with domain $C_c^\infty(\R^N)$ is quasi-dissipative in $L^p(\R^N)$.
\item[\em (ii)] If $0\le\alpha \le (N-2)(p-1)$, $\beta=\alpha-2$ then $\widetilde{L}+V$ with domain $C_c^\infty(\R^N)$ is dissipative in $L^p(\R^N)$ if
\begin{equation} \label{cond_eta}
\eta+\frac{(N+\alpha-2)^2}{pp'}-\frac{\alpha(N+\alpha-2)}{p}\ge 0.
\end{equation}
\end{itemize}
\end{proposition}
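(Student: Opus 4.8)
The plan is to build directly on the computation already carried out for Proposition \ref{prelim1}. Writing $\widetilde L+V=(L+V)-\eta|x|^\beta$ and pairing against $u|u|^{p-2}$, the only new contribution is $-\eta\int_{\R^N}|x|^\beta|u|^p\,dx$, so starting from estimate \eqref{estim-L} I would record
\[
{\mathcal Re}\langle(\widetilde L+V)u,u|u|^{p-2}\rangle \le -(p-1)I_0^2-(p-1)I_\alpha^2+cJ_0^2+\alpha I_\alpha J_\alpha-\eta\int_{\R^N}|x|^\beta|u|^p\,dx.
\]
Note that \eqref{estim-L} was derived before the restriction $\alpha\le(N-2)(p-1)$ was invoked, so it is available for all $\alpha\ge0$. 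Since $c\le(p-1)\gamma_0$, the case $\alpha=0$ of Lemma \ref{hardy-p} gives $-(p-1)I_0^2+cJ_0^2\le 0$, so in both parts it remains to control $-(p-1)I_\alpha^2+\alpha I_\alpha J_\alpha-\eta\int_{\R^N}|x|^\beta|u|^p\,dx$.

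For part (i) I would first absorb the cross term by Young's inequality, $\alpha I_\alpha J_\alpha\le(p-1)I_\alpha^2+\tfrac{\alpha^2}{4(p-1)}J_\alpha^2$, which cancels $-(p-1)I_\alpha^2$ and leaves $\tfrac{\alpha^2}{4(p-1)}J_\alpha^2-\eta\int|x|^\beta|u|^p\,dx=\int_{\R^N}\big(\tfrac{\alpha^2}{4(p-1)}|x|^{\alpha-2}-\eta|x|^\beta\big)|u|^p\,dx$. Because $\alpha\ge2$ and $\beta>\alpha-2$, the scalar function $r\mapsto\tfrac{\alpha^2}{4(p-1)}r^{\alpha-2}-\eta r^\beta$ stays bounded near the origin (it tends to $0$ if $\alpha>2$ and to $\tfrac1{p-1}$ if $\alpha=2$) and tends to $-\infty$ as $r\to\infty$, hence has a finite supremum $\omega\ge0$. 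This yields ${\mathcal Re}\langle(\widetilde L+V)u,u|u|^{p-2}\rangle\le\omega\|u\|_p^p$, i.e.\ quasi-dissipativity; here the hypothesis $\alpha\le(N-2)(p-1)$ is not needed precisely because Young's inequality fully absorbs the gradient term.

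For part (ii) the choice $\beta=\alpha-2$ is what makes the extra potential scale like the remaining terms: indeed $\int|x|^{\alpha-2}|u|^p\,dx=J_\alpha^2$, so the quantity to be controlled becomes the pure quadratic form $-(p-1)I_\alpha^2+\alpha I_\alpha J_\alpha-\eta J_\alpha^2$. Setting $t=I_\alpha/J_\alpha$, Hardy's inequality (Lemma \ref{hardy-p}) forces $t\ge\sqrt{\gamma_\alpha}$, and the form has the sign of $g(t)=-(p-1)t^2+\alpha t-\eta$, a downward parabola with vertex at $t^\ast=\tfrac{\alpha}{2(p-1)}$. The key observation — and the step I expect to be the crux — is that the hypothesis $\alpha\le(N-2)(p-1)$ is exactly $\tfrac{\alpha}{p-1}\le\sqrt{\gamma_\alpha}$, so $t^\ast\le\tfrac12\sqrt{\gamma_\alpha}<\sqrt{\gamma_\alpha}$; thus $g$ is decreasing on the admissible range $[\sqrt{\gamma_\alpha},\infty)$ and attains its maximum at the left endpoint. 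It therefore suffices to impose $g(\sqrt{\gamma_\alpha})\le0$, i.e.\ $\eta+(p-1)\gamma_\alpha-\alpha\sqrt{\gamma_\alpha}\ge0$; recalling $\gamma_\alpha=\big(\tfrac{N+\alpha-2}{p}\big)^2$ and $\tfrac{p-1}{p^2}=\tfrac{1}{pp'}$, this is precisely condition \eqref{cond_eta}, and dissipativity follows. The only routine point left is the degenerate case $J_\alpha=0$, which forces $u=0$ and is trivial.
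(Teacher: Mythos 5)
Your proof is correct and follows essentially the same route as the paper: both parts start from \eqref{estim-L}, part (i) absorbs the cross term by Young's inequality and bounds the resulting potential $\frac{\alpha^2}{4(p-1)}|x|^{\alpha-2}-\eta|x|^\beta$ from above, and part (ii) reduces to the sign of the quadratic form $-(p-1)I_\alpha^2+\alpha I_\alpha J_\alpha-\eta J_\alpha^2$ via Hardy's inequality. The only cosmetic difference is that you treat part (ii) in one stroke by locating the vertex of $g(t)=-(p-1)t^2+\alpha t-\eta$ relative to $\sqrt{\gamma_\alpha}$ (using that $\alpha\le(N-2)(p-1)$ is equivalent to $\alpha/(p-1)\le\sqrt{\gamma_\alpha}$), whereas the paper splits into the cases $\eta\ge0$ and $\eta<0$; both yield exactly condition \eqref{cond_eta}.
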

\begin{proof}
{\em (i)} If $\beta > \alpha-2$, applying \eqref{estim-L} and Young's inequality we obtain
\begin{align*}
{\mathcal Re}\langle (\widetilde{L}+V)u,u|u|^{p-2}\rangle &\le -(p-1)I_0^2-(p-1)I_\alpha^2+cJ_0^2+\eps I_\alpha^2\\
&\quad +\int_{\R^N}\left(\frac{\alpha^2}{4\eps}|x|^{\alpha -2}-\eta |x|^\beta\right)|u|^p\,dx\\
&\le -(p-1)I_0^2-(p-1-\eps)I_\alpha^2+cJ_0^2+M\|u\|^p_p
\end{align*}
for $u\in C_c^\infty(\R^N)$ and any $\eps >0$, where $M$ is a positive constant such that $\frac{\alpha^2}{4\eps}|x|^{\alpha -2}-\eta |x|^\beta \le M$ for all $x\in \R^N$, which holds since $\beta >\alpha -2\ge 0$. Choosing now $\eps \le p-1$ and applying \eqref{hardy-p-formula2} we obtain
$${\mathcal Re}\langle (\widetilde{L}+V)u,u|u|^{p-2}\rangle \le M\|u\|^p_p$$
which means that $\widetilde{L}+V$ with domain $C_c^\infty(\R^N)$ is quasi-dissipative in $L^p(\R^N)$.\\
{\em (ii)} If $\beta=\alpha-2$ then \eqref{estim-L} gives
\begin{equation*}
{\mathcal Re}\langle (\widetilde{L}+V)u,u|u|^{p-2}\rangle \le -(p-1)I_0^2-(p-1)I_\alpha^2+cJ_0^2+\alpha I_\alpha J_\alpha -\eta J_\alpha^2.
\end{equation*}
If $\eta\ge 0$, then the conclusion easily follows as in the end of the proof of Proposition \ref{prelim1}, under the assumption $c\le \gamma_0(p-1)$ and $\alpha \le (N-2)(p-1)$. If $\eta <0$ then by Lemma \ref{hardy-p} we have
$$
-(p-1)I_\alpha^2+\alpha I_\alpha J_\alpha -\eta J_\alpha^2\le \big(-(p-1)+\alpha \gamma_\alpha^{-1/2} -\eta\gamma_\alpha^{-1}\big) I_\alpha^2.
$$
The right hand side is nonpositive if
$$
\eta+\frac{(N+\alpha-2)^2}{pp'}-\frac{\alpha(N+\alpha-2)}{p}\ge 0.
$$
\end{proof}

\begin{remark}
{\rm Condition \eqref{cond_eta} is sharp as proved in \cite[Proposition 4.2]{MOSS}.}
\end{remark}

\section{Main results}
In this section we state and prove the main results of this paper.\\
In order to apply Theorem \ref{Oka-th1.7} to our situation we need the following lemma whose proof follows the same lines of \cite[Lemma 3.4]{Oka96}.
\begin{lemma}\label{main-lem}
Set $ V_\eps=\frac{1}{|x|^2+\eps}$, $\eps>0$. Assume $\alpha \le (N-2)(p-1)$. Then for every $u\in C_c^\infty(\R^N)$
\begin{equation}\label{main-inequality}
{\mathcal Re}\langle -Lu,|V_\eps u|^{p-2}V_\eps u\rangle \ge \beta_0\int_{\R^N}V_\eps^p|u|^p\,dx+\beta_\alpha \int_{\R^N}V_\eps^p|u|^p|x|^\alpha\,dx,
\end{equation}
where
$$\beta_0=\frac{N(p-1)(N-2p)}{p^2},\qquad \beta_\alpha =\frac{(Np-N-\alpha)(N+\alpha -2p)}{p^2}.$$
Moreover, if $N>2p$ then both $\beta_0$ and $\beta_\alpha$ are positive.
\end{lemma}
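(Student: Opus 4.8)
The plan is to follow the scheme of \cite[Lemma 3.4]{Oka96}, writing the pairing as a weighted Dirichlet form and then absorbing the singular remainders by means of the Hardy inequality of Lemma~\ref{hardy-p}.

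Since $V_\eps>0$ one has $|V_\eps u|^{p-2}V_\eps u=V_\eps^{p-1}|u|^{p-2}u$, so that, integrating by parts,
\begin{equation*}
{\mathcal Re}\langle -Lu,\,V_\eps^{p-1}|u|^{p-2}u\rangle={\mathcal Re}\int_{\R^N}\nabla u\cdot\nabla\big(h\,|u|^{p-2}\bar u\big)\,dx,\qquad h=(1+|x|^\alpha)V_\eps^{p-1}.
\end{equation*}
Expanding the gradient, taking real parts and using $|\nabla u|^2\ge|\nabla|u||^2$ and ${\mathcal Re}(\bar u\nabla u)=|u|\nabla|u|$ --- after the same $(|u|^2+\delta)^{(p-2)/2}$ regularization employed in Proposition~\ref{prelim1} --- everything reduces to the nonnegative function $v:=|u|$, and one is left with
\begin{equation*}
{\mathcal Re}\langle -Lu,\,V_\eps^{p-1}|u|^{p-2}u\rangle\ge\frac1p\int_{\R^N}\nabla h\cdot\nabla(v^p)\,dx+(p-1)\int_{\R^N}h\,v^{p-2}|\nabla v|^2\,dx.
\end{equation*}

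First I would integrate by parts once more in the leading term, rewriting it as $-\frac1p\int_{\R^N}(\Delta h)\,v^p\,dx$; using $\nabla V_\eps=-2V_\eps^2x$ and $\Delta|x|^\alpha=\alpha(N+\alpha-2)|x|^{\alpha-2}$ this yields a linear combination of the ``regular'' densities $V_\eps^p v^p$ and $|x|^\alpha V_\eps^p v^p$ together with the singular densities $|x|^{\alpha-2}V_\eps^{p-1}v^p$ and $|x|^{\alpha+2}V_\eps^{p+1}v^p$ (and their $\alpha=0$ counterparts). To handle the Dirichlet term I would split $h=V_\eps^{p-1}+|x|^\alpha V_\eps^{p-1}$ and apply Lemma~\ref{hardy-p} not to $v$ but to the rescaled function $\tilde v:=V_\eps^{(p-1)/p}v$, whose $p$-th power $\tilde v^p=V_\eps^{p-1}v^p$ matches exactly the density produced above. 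Since $\nabla\tilde v=V_\eps^{(p-1)/p}\big(\nabla v-\tfrac{2(p-1)}p V_\eps x\,v\big)$, expanding the square in Lemma~\ref{hardy-p} and integrating by parts once converts that inequality into a lower bound for $(p-1)\int_{\R^N}|x|^\alpha V_\eps^{p-1}v^{p-2}|\nabla v|^2\,dx$ expressed again through the very same densities, now carrying the coefficient $(p-1)\gamma_\alpha$ in front of $\int_{\R^N}|x|^{\alpha-2}V_\eps^{p-1}v^p\,dx$; the case $\alpha=0$ is identical.

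Collecting all the contributions, the coefficient of the most singular integral $\int_{\R^N}|x|^{\alpha-2}V_\eps^{p-1}v^p\,dx$ equals
\begin{equation*}
-\frac{\alpha(N+\alpha-2)}{p}+(p-1)\gamma_\alpha=\frac{(N+\alpha-2)\big[(N-2)(p-1)-\alpha\big]}{p^2},
\end{equation*}
and I expect this sign analysis to be the crux of the proof: the right-hand side is nonnegative \emph{precisely} under the hypothesis $\alpha\le(N-2)(p-1)$. Granted this, I would exploit the elementary identities $V_\eps^{p-1}=V_\eps^p(|x|^2+\eps)$ and $V_\eps^{p+1}(|x|^2+\eps)=V_\eps^p$, which give the pointwise bounds $|x|^{\alpha-2}V_\eps^{p-1}\ge|x|^\alpha V_\eps^p$ and $|x|^{\alpha+2}V_\eps^{p+1}\le|x|^\alpha V_\eps^p$ (and similarly for $\alpha=0$): the singular density, carrying the nonnegative coefficient above, can be bounded below by $|x|^\alpha V_\eps^p v^p$, while the densities $|x|^{\alpha+2}V_\eps^{p+1}v^p$, whose coefficients turn out to be nonpositive, can likewise be replaced by $|x|^\alpha V_\eps^p v^p$ from below. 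After these substitutions every term is a multiple of $\int_{\R^N}V_\eps^p v^p\,dx$ or of $\int_{\R^N}|x|^\alpha V_\eps^p v^p\,dx$, and a routine but lengthy algebraic simplification collapses the two coefficients exactly to $\beta_0$ and $\beta_\alpha$. Finally, when $N>2p$ both constants are positive: all three factors of $\beta_0=N(p-1)(N-2p)/p^2$ are positive, while in $\beta_\alpha$ one has $N(p-1)-\alpha\ge2(p-1)>0$ (from $\alpha\le(N-2)(p-1)$) and $N+\alpha-2p\ge N-2p>0$.
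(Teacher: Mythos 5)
Your argument is correct and follows essentially the same route as the paper: integration by parts to produce a weighted Dirichlet form, Hardy's inequality (Lemma \ref{hardy-p}) applied to $V_\eps^{(p-1)/p}|u|$ (the paper's $R|u|$ with $R^p=V_\eps^{p-1}$), the sign condition $\alpha\le(N-2)(p-1)$ to make the coefficient of the most singular density $|x|^{\alpha-2}V_\eps^{p-1}|u|^p$ nonnegative, and the pointwise bound $|x|^2V_\eps\le 1$ to collapse everything onto the two target densities. The only difference is bookkeeping --- the paper carries the substitution $R|u|$ through the whole computation whereas you keep $|u|$ and the weight $h$ separate until the Hardy step --- and your coefficients (including $-\tfrac{4(p-1)}{p^2}$ for the $|x|^{\alpha+2}V_\eps^{p+1}$ density and the final values $\beta_0$, $\beta_\alpha$) do check out.
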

\begin{proof}
Let $u\in C_c^\infty(\R^N)$ and set $u_\delta=\big((R|u|)^2+\delta\big)^\frac 1 2$, where $R^p:=V_\eps^{p-1}$.  In the computations below, we have to take $\delta>0$ in the case $1<p<2$, whereas we only take $\delta=0$ to deal with the case $p\ge 2$. We have
$$
\langle -Lu, |V_\eps u|^{p-2}V_\eps u\rangle=-\lim_{\delta\to 0}\int_{\R^N}u_\delta^{p-2}R^2 \bar u\, Lu \,d x.
$$
Integrating by parts we have
\begin{align} \label{prima}
-\int_{\R^N}u_\delta^{p-2}R^2 \bar u\, Lu \,dx & = \int_{\R^N}R^2  \bar u\, \nabla u\cdot\nabla(u_\delta^{p-2})  (1+|x|^\alpha) dx\nonumber\\
&\quad +\int_{\R^N}u_\delta^{p-2}\nabla u\cdot\nabla(R^2 \bar u ) (1+|x|^\alpha) \,dx\\
 & \quad + \alpha\int_{\R^N} u_\delta^{p-2}R^2 \overline{u}\, |x|^{\alpha-2} x\cdot \nabla u \,dx.\nonumber
\end{align}
Now, computing $\nabla(u_\delta^{p-2})$ and writing $R^2  \bar u\, \nabla u=R\bar u\big(\nabla(Ru)-u\nabla R\big)$ we have
\begin{align*}
 \int_{\R^N}R^2  &\bar u\, \nabla u\cdot\nabla(u_\delta^{p-2})  (1+|x|^\alpha) dx\\
&=   \frac{p-2}{2}\int_{\R^N}u_\delta^{p-4}R\bar u\,\nabla (R^2|u|^2)\cdot\nabla (R\,u)   (1+|x|^\alpha) dx\\
& \quad - \frac{p-2}{2}\int_{\R^N}u_\delta^{p-4}R|u|^2\nabla (R^2|u|^2)\cdot\nabla  R\,  (1+|x|^\alpha) dx.
\end{align*}
Using also the identity
$$
\nabla(R^2 \bar u )\cdot \nabla u =|\nabla (R \, u )|^2-u\nabla (R\, \bar u )\cdot \nabla R+R\, \bar u \nabla R \cdot \nabla u
$$
Equation \eqref{prima} yields
\begin{align*}
-\int_{\R^N}u_\delta^{p-2}R^2 \bar u\, Lu \,dx &=   \frac{p-2}{2}\int_{\R^N}u_\delta^{p-4}R\bar u\,\nabla (R^2|u|^2)\cdot\nabla (R\,u)   (1+|x|^\alpha) dx\\
 & \quad+\int_{\R^N}u_\delta^{p-2}|\nabla (R \, u )|^2 (1+|x|^\alpha) \,dx\\
& \quad \underbrace{- \frac{p-2}{2}\int_{\R^N}u_\delta^{p-4}R|u|^2\nabla (R^2|u|^2)\cdot\nabla  R\,  (1+|x|^\alpha) dx}_{=I}\\
 & \quad+\underbrace{ \int_{\R^N}u_\delta^{p-2}R\, \bar u \nabla R \cdot \nabla u  (1+|x|^\alpha) \,dx}_{=J}\\
 & \quad\underbrace{-\int_{\R^N}u_\delta^{p-2}u\nabla (R\, \bar u )\cdot \nabla R  (1+|x|^\alpha) \,dx}_{=K}\\
 & \quad +\alpha\int_{\R^N} u_\delta^{p-2}R^2 \overline{u}\, |x|^{\alpha-2} x\cdot \nabla u \,dx.
\end{align*}
Now, introduce the function $Q=R^p$. Writing $\nabla(R^2|u|^2)=2R|u|^2\nabla R+2|u|R^2\nabla|u|$ we have
\begin{align*}
I&=-(p-2)\int_{\R^N}u_\delta^{p-4}R^2\,| u|^4\,|\nabla R|^2   (1+|x|^\alpha) dx\\
&\quad -(p-2)\int_{\R^N}u_\delta^{p-4}|u|^3\,R^3\nabla R\cdot\nabla| u| \,  (1+|x|^\alpha) dx\\
&=-\frac{p-2}{p^2}\int_{\R^N}u_\delta^{p-4}R^{4-2p}\,| u|^4\,|\nabla Q|^2   (1+|x|^\alpha) dx\\
&\quad -\frac{p-2}{p}\int_{\R^N}u_\delta^{p-4}|u|^3\,R^{4-p}\nabla Q\cdot\nabla |u|\,  (1+|x|^\alpha) dx.
\end{align*}
Moreover,
\begin{align*}
J+K&=\int_{\R^N}u_\delta^{p-2}\Big(R\,\bar u\nabla R\cdot \nabla u-u\nabla (R\,\bar u) \cdot \nabla R\Big) (1+|x|^\alpha) \,dx\\
&=-\int_{\R^N}u_\delta^{p-2}|u|^2|\nabla R|^2 (1+|x|^\alpha) \,dx\\
&\quad+2i\int_{\R^N} u_\delta^{p-2}\,{\mathcal Im}(\bar u\,\nabla u)  \cdot R \nabla R(1+|x|^\alpha) \,dx\\
&=-\frac{1}{p^2}\int_{\R^N}u_\delta^{p-2}|u|^2 R^{2-2p}|\nabla Q|^2 (1+|x|^\alpha) \,dx\\
&\quad+\frac{2i}{p}\int_{\R^N} u_\delta^{p-2}R^{2-p}\,{\mathcal Im}(\bar u\,\nabla u)  \cdot  \nabla Q(1+|x|^\alpha) \,dx.
\end{align*}
Hence we have
\begin{align*}
-\int_{\R^N}u_\delta^{p-2}R^2 \bar u\, Lu \,dx &=   (p-2)\int_{\R^N}u_\delta^{p-4} R\, |u|\,\nabla (R\,|u|)\cdot (R\, \bar u) \nabla (R\,u)   (1+|x|^\alpha) dx\\
 & \quad+\int_{\R^N}u_\delta^{p-2}|\nabla (R \, u )|^2 (1+|x|^\alpha) \,dx+J_\delta\\
&\quad+\frac{2i}{p}\int_{\R^N} u_\delta^{p-2}R^{2-p}\,{\mathcal Im}(\bar u\,\nabla u)  \cdot  \nabla Q(1+|x|^\alpha) \,dx\\
& \quad +\alpha\int_{\R^N} u_\delta^{p-2}R^2 \overline{u}\, |x|^{\alpha-2} x\cdot \nabla u \,dx,
\end{align*}
where we have set
\begin{align*}
J_\delta=&-\frac{p-2}{p^2}\int_{\R^N}u_\delta^{p-4}R^{4-2p}\,| u|^4\,|\nabla Q|^2   (1+|x|^\alpha) dx\\
&\quad -\frac{p-2}{p}\int_{\R^N}u_\delta^{p-4}|u|^3\,R^{4-p}\nabla Q\cdot\nabla |u|\,  (1+|x|^\alpha) dx\\
 &\quad -\frac{1}{p^2}\int_{\R^N}u_\delta^{p-2}|u|^2 R^{2-2p}|\nabla Q|^2 (1+|x|^\alpha) \,dx. 
\end{align*}
Now, we take the real parts of both sides and apply the identity ${\mathcal Re}(\bar \phi \nabla \phi)=|\phi|\nabla|\phi|$ to obtain
\begin{align*}
-{\mathcal Re}\int_{\R^N}u_\delta^{p-2}R^2 \bar u\, Lu \,dx &=   (p-2)\int_{\R^N}u_\delta^{p-4}R^2| u|^2|\nabla (R|u|)|^2    (1+|x|^\alpha) dx\\
 & \quad+\int_{\R^N}u_\delta^{p-2}|\nabla (R \, u )|^2 (1+|x|^\alpha) \,dx+J_\delta\\
 &\quad + \alpha\int_{\R^N} u_\delta^{p-2}R^2 |u|\, |x|^{\alpha-2} x\cdot \nabla |u| \,dx\\
 &= (p-2)\int_{\R^N}u_\delta^{p-2}|\nabla (R|u|)|^2    (1+|x|^\alpha) dx\\
 &\quad - (p-2)\delta \int_{\R^N}u_\delta^{p-4} |\nabla (R|u|)|^2    (1+|x|^\alpha) dx\\
 &\quad + \int_{\R^N}u_\delta^{p-2}|\nabla (R \, u )|^2 (1+|x|^\alpha) \,dx+J_\delta\\
 &\quad + \alpha\int_{\R^N} u_\delta^{p-2}R^2 |u|\, |x|^{\alpha-2} x\cdot \nabla |u| \,dx.
\end{align*}
Since the inequality $|\nabla\phi |\ge |\nabla|\phi||$ holds and $\delta=0$ if $p\ge 2$, $\delta>0$ if $1<p<2$ we can estimate as follows
\begin{align*}
-{\mathcal Re}\int_{\R^N}u_\delta^{p-2}R^2 \bar u\, Lu \,dx &\ge    (p-1) \int_{\R^N}u_\delta^{p-2} |\nabla (R |u |)|^2    (1+|x|^\alpha) dx+J_\delta\\
 &\quad + \alpha\int_{\R^N} u_\delta^{p-2}R^2 |u|\, |x|^{\alpha-2} x\cdot \nabla |u| \,dx
\end{align*}
if $p\ge 2$ and
\begin{align*}
-{\mathcal Re}\int_{\R^N}u_\delta^{p-2}R^2 \bar u\, Lu \,dx &\ge    (p-1) \int_{\R^N}u_\delta^{p-2} |\nabla (R u )|^2    (1+|x|^\alpha) dx+J_\delta\\
 &\quad + \alpha\int_{\R^N} u_\delta^{p-2}R^2 |u|\, |x|^{\alpha-2} x\cdot \nabla |u| \,dx
\end{align*}
if $1<p<2$. Letting $\delta \to 0^+$, we are lead to
\begin{equation}
\begin{aligned}    \label{intermediate}
{\mathcal Re}\langle -Lu, |V_\eps u|^{p-2}V_\eps u\rangle
&\ge (p-1)\int_{\R^N}(R|u|)^{p-2}|\nabla(R |u|)|^2(1+|x|^\alpha) dx\\
&\quad-\frac{p-1}{p^2}\int_{\R^N}R^{-p}\,| u|^p\,|\nabla Q|^2   (1+|x|^\alpha) dx\\
&\quad -\frac{p-2}{p}\int_{\R^N}|u|^{p-1}\,\nabla Q\cdot\nabla |u|\,  (1+|x|^\alpha) dx\\
 &\quad + \alpha\int_{\R^N} |u|^{p-1}R^p  |x|^{\alpha-2} x\cdot \nabla |u| \,dx,
\end{aligned}
\end{equation}
where we have used again the inequality $|\nabla\phi |\ge |\nabla|\phi||$ in the first integral of the right hand side of \eqref{intermediate}, since for $1<p<2$ we had $|\nabla R\,u|^2$ instead of $|\nabla (R|u|)|^2$.
Now, by the identity $p|u|^{p-1} \nabla |u|=\nabla |u|^p$, integrating by parts and recalling the definition of $R$ we infer
\begin{align*}
&\quad -\frac{p-2}{p}\int_{\R^N}|u|^{p-1}\,\nabla Q\cdot\nabla |u|\,  (1+|x|^\alpha) dx\\
&= \frac{p-2}{p^2}\int_{\R^N} |u|^{p} \, \Delta R^p (1+|x|^\alpha) dx+\frac{\alpha(p-2)}{p^2}\int_{\R^N}|u|^{p}\nabla R^p\cdot x |x|^{\alpha-2} dx\\
&=- \frac{2N(p-1)(p-2)}{p^2}\int_{\R^N} V_\eps^p|u|^p(1+|x|^\alpha) dx\\
&\quad+\frac{4p(p-1)(p-2)}{p^2}\int_{\R^N}|x|^2V_\eps^{p+1}|u|^p (1+|x|^\alpha) dx\\
&\quad -\frac{2\alpha(p-1)(p-2)}{p^2}\int_{\R^N}V_\eps^p |u|^p |x|^\alpha dx
\end{align*}
and
\begin{align*}
&\alpha\int_{\R^N} |u|^{p-1}R^p  |x|^{\alpha-2} x\cdot \nabla |u| \,dx\\
&= - \frac{\alpha}{p}\int_{\R^N}|u|^p|x|^{\alpha-2} x\cdot \nabla R^pdx -\frac{\alpha(N+\alpha-2)}{p}\int_{\R^N} R^p |x|^{\alpha-2}|u|^pdx\\
&= \frac{2(p-1)\alpha}{p}\int_{\R^N} V_\eps^p|u|^p |x|^\alpha dx-\frac{\alpha(N+\alpha-2)}{p}\int_{\R^N}V_\eps^{p-1}|u|^p |x|^{\alpha-2}dx.
\end{align*}
Finally,
\begin{equation*}
\int_{\R^N}|u|^{p}R^{-p}|\nabla R^p|^2 (1+|x|^\alpha)\, dx=4(p-1)^2 \int_{\R^N}|x|^2V_\eps^{p+1}|u|^p (1+|x|^\alpha) dx.
\end{equation*}
By using such formulas in \eqref{intermediate} we obtain
\begin{align*}
{\mathcal Re}\langle -Lu, |V_\eps u|^{p-2}V_\eps u\rangle &\ge (p-1)\int_{\R^N}(R|u|)^{p-2}|\nabla(R |u|)|^2(1+|x|^\alpha) dx\\
&\quad - \frac{4(p-1)}{p^2}\int_{\R^N}|x|^2V_\eps^{p+1}|u|^p (1+|x|^\alpha) dx\\
&\quad -\frac{2N(p-1)(p-2)}{p^2}\int_{\R^N}V_\eps^{p}|u|^p (1+|x|^\alpha) dx\\
&\quad +\frac{4\alpha(p-1)}{p^2}\int_{\R^N}V_\eps^{p}|u|^p|x|^\alpha dx\\
&\quad -\frac{\alpha(N+\alpha-2)}{p}\int_{\R^N}V_\eps^{p-1}\frac{|u|^p}{|x|^2} |x|^\alpha dx.
\end{align*}
Applying Lemma \ref{hardy-p}, and using that $|x|^2 V_\eps\le 1$ we are lead to
\begin{equation} \label{stima-yosida}
\begin{aligned}
{\mathcal Re}\langle -Lu, &|V_\eps u|^{p-2}V_\eps u\rangle\ge (p-1)\gamma_0\int_{\R^N}\frac{V_\eps^{p-1}|u|^p}{|x|^2}dx\\
&\quad + \frac{N+\alpha-2}{p} \left(\frac{p-1}{p}(N+\alpha-2)-\alpha\right)\int_{\R^N}\frac{V_\eps^{p-1}|u|^p}{|x|^2}\,|x|^\alpha dx\\
	&\quad -\frac{p-1}{p^2}(4+2Np-4N)\int_{\R^N} V_\eps^{p}|u|^p (1+|x|^\alpha) dx\\
	&\quad +\frac{4\alpha(p-1)}{p^2}\int_{\R^N}V_\eps^p|u|^p |x|^\alpha dx.
\end{aligned}
\end{equation}
Since $\alpha\le (N-2)(p-1)$ we have $\frac{p-1}{p}(N+\alpha-2)-\alpha\ge0$ and then from the estimate $|x|^2 V_\eps\le 1$ it follows that
\begin{align*}
{\mathcal Re}\langle -Lu, |V_\eps u|^{p-2}V_\eps u\rangle &\ge (p-1)\gamma_0\int_{\R^N}V_\eps^{p}|u|^pdx\\
&\quad + \frac{N+\alpha-2}{p} \left(\frac{p-1}{p}(N+\alpha-2)-\alpha\right)\int_{\R^N}{V_\eps^{p}|u|^p}\,|x|^\alpha dx\\
	&\quad -\frac{p-1}{p^2}(4+2Np-4N)\int_{\R^N} V_\eps^{p}|u|^p (1+|x|^\alpha) dx\\
	&\quad +\frac{4\alpha(p-1)}{p^2}\int_{\R^N}V_\eps^p|u|^p |x|^\alpha dx.
\end{align*}
Thus we have
\begin{equation*}
{\mathcal Re}\langle -Lu, |V_\eps u|^{p-2}V_\eps u\rangle \ge \beta_0\int_{\R^N}V_\eps^{p}|u|^pdx+\beta_\alpha \int_{\R^N}{V_\eps^{p}|u|^p}\,|x|^\alpha dx,
\end{equation*}
where
\begin{align*}
\beta_0& =(p-1)\gamma_0-\frac{p-1}{p^2}(4+2Np-4N)=\frac{N(p-1)(N-2p)}{p^2}\\[3mm]
\beta_\alpha & =\frac{N+\alpha-2}{p} \left(\frac{p-1}{p}(N+\alpha-2)-\alpha\right)-\frac{p-1}{p^2}(4+2Np-4N)+\frac{4\alpha(p-1)}{p^2}\\
&=\frac{(Np-N-\alpha)(N+\alpha-2p)}{p^2}.
\end{align*}
So, if $N>2p$ then $\beta_0>0$ and since $0\le \alpha \le (N-2)(p-1)<N(p-1)$ we deduce that $\beta_\alpha >0$. 
\end{proof}

\begin{remark}\label{rem3.2}
{\rm We rewrite estimate \eqref{stima-yosida} as follows
$$
{\mathcal Re}\langle -Lu, |V_\eps u|^{p-2}V_\eps u\rangle \ge \beta_0\int_{\R^N}V_\eps^{p}|u|^p dx + \int_{\R^N}(k_0+k_1 V_\eps |x|^2) V_\eps^{p-1}|u|^p |x|^{\alpha-2} dx
$$
where
$$
\begin{array}{l}
k_0=\displaystyle\frac{N+\alpha-2}{p} \left(\frac{p-1}{p}(N+\alpha-2)-\alpha\right)\\[3mm]
k_1=	\displaystyle\frac{4\alpha(p-1)}{p^2}-\frac{p-1}{p^2}(4+2Np-4N).
\end{array}
$$
Notice that $k_0\ge 0$ if $\alpha\le (N-2)(p-1)$ and that $k_0+k_1=\beta_\alpha$. Now, $k_0+k_1 V_\eps |x|^2=f(|x|^2)$, where $f(r)=\frac{\eps k_0+(k_0+k_1)r}{\eps+r}$. Since $\displaystyle\inf_{[0,\infty)} f=\min\{k_0,k_0+k_1\}=:\mu$ we find
$$
{\mathcal Re}\langle -Lu-\mu|x|^{\alpha-2}u, |V_\eps u|^{p-2}V_\eps u\rangle \ge \beta_0\int_{\R^N}V_\eps^{p}|u|^p dx.
$$
The easiest case (see Lemma \ref{main-lem}) is when $\mu \ge 0$.}
\end{remark}

Now, we prove a similar estimate for the operator $\widetilde L=L-\eta|x|^\beta$.

\begin{lemma}\label{l:tilde}
Set $ V_\eps=\frac{1}{|x|^2+\eps}$, $\eps>0$. If $\beta>\alpha-2\ge 0$ and $\eta >0$ then for every $u\in C_c^\infty(\R^N)$
$$
{\mathcal Re}\langle -\widetilde Lu-mu, |V_\eps u|^{p-2}V_\eps u\rangle\ge \beta_0\int_{\R^N}V_\eps^{p}|u|^pdx+\delta_\alpha\int_{\R^N}V_\eps^p|u|^p |x|^\alpha dx,
$$
where $m=\min_{x\in\R^N}\left(\frac{N+\alpha-2}{p}\cdot\frac{(p-1)(N-2)-\alpha}{p} \,|x|^{\alpha-2}+\eta |x|^\beta\right) $, $\beta_0$ is given in Lemma \ref{main-lem} and
$$
\delta_\alpha=\frac{p-1}{p^2}(4\alpha-4-2Np+4N) .
$$
\end{lemma}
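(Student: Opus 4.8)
The plan is to reuse the entire computation of Lemma~\ref{main-lem} up to inequality \eqref{stima-yosida}: that estimate concerns only $-L$ and its derivation rests solely on Lemma~\ref{hardy-p} (valid for all $\alpha\ge0$) together with the elementary bound $|x|^2V_\eps\le1$, so it is available here without the restriction $\alpha\le(N-2)(p-1)$. Since $\widetilde L=L-\eta|x|^\beta$, one has $-\widetilde Lu=-Lu+\eta|x|^\beta u$, and because $|V_\eps u|^{p-2}V_\eps u=V_\eps^{p-1}|u|^{p-2}u$, the perturbation only adds the nonnegative real contribution ${\mathcal Re}\langle\eta|x|^\beta u,|V_\eps u|^{p-2}V_\eps u\rangle=\eta\int_{\R^N}V_\eps^{p-1}|u|^p|x|^\beta\,dx$. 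Thus the proof amounts to adding this integral to \eqref{stima-yosida} and regrouping.

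I would first record that the coefficient of $\int_{\R^N}V_\eps^{p-1}|u|^p|x|^{\alpha-2}\,dx$ in \eqref{stima-yosida} equals $k_0:=\frac{N+\alpha-2}{p}\big(\frac{p-1}{p}(N+\alpha-2)-\alpha\big)=\frac{N+\alpha-2}{p}\cdot\frac{(p-1)(N-2)-\alpha}{p}$, which is exactly the coefficient of $|x|^{\alpha-2}$ in the definition of $m$. For the $\gamma_0$-term, whose factor $(p-1)\gamma_0$ is positive, I apply $V_\eps^{p-1}|x|^{-2}\ge V_\eps^p$ to replace it by $(p-1)\gamma_0\int_{\R^N}V_\eps^p|u|^p\,dx$; combined with the $|x|$-independent part of $-\frac{p-1}{p^2}(4+2Np-4N)\int_{\R^N}V_\eps^p|u|^p(1+|x|^\alpha)\,dx$, this yields precisely $\beta_0\int_{\R^N}V_\eps^p|u|^p\,dx$, exactly as in Lemma~\ref{main-lem}. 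The $|x|^\alpha$-part of the same term, together with $\frac{4\alpha(p-1)}{p^2}\int_{\R^N}V_\eps^p|u|^p|x|^\alpha\,dx$, produces $\delta_\alpha\int_{\R^N}V_\eps^p|u|^p|x|^\alpha\,dx$, since $\delta_\alpha=\frac{p-1}{p^2}(4\alpha-4-2Np+4N)$ is exactly this algebraic sum.

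The essential new step, and the only genuine difficulty, is the disposal of the $k_0$-term. In Lemma~\ref{main-lem} the hypothesis $\alpha\le(N-2)(p-1)$ forces $k_0\ge0$, so one simply estimates $V_\eps^{p-1}|x|^{\alpha-2}\ge V_\eps^p|x|^\alpha$ and absorbs it into the $V_\eps^p|x|^\alpha$-integral. Here $k_0$ may have either sign, so that one-sided bound is unavailable; instead I keep the $k_0$-term paired with the perturbation and factor out the common nonnegative weight $V_\eps^{p-1}|u|^p$, giving $\int_{\R^N}V_\eps^{p-1}|u|^p\big(k_0|x|^{\alpha-2}+\eta|x|^\beta\big)\,dx\ge m\int_{\R^N}V_\eps^{p-1}|u|^p\,dx$ from the pointwise inequality $k_0|x|^{\alpha-2}+\eta|x|^\beta\ge m$. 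This is precisely where $\beta>\alpha-2\ge0$ and $\eta>0$ enter: since $\alpha\ge2$ the map $x\mapsto k_0|x|^{\alpha-2}+\eta|x|^\beta$ is continuous on all of $\R^N$, and since $\eta>0$, $\beta>0$, it is coercive, so its infimum $m$ is finite and attained even when $k_0<0$. Finally, ${\mathcal Re}\langle -mu,|V_\eps u|^{p-2}V_\eps u\rangle=-m\int_{\R^N}V_\eps^{p-1}|u|^p\,dx$ cancels the leftover contribution $m\int_{\R^N}V_\eps^{p-1}|u|^p\,dx$ exactly, irrespective of the sign of $m$, which delivers the claimed inequality. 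The care required is entirely in never applying to the sign-indefinite $k_0$-term an estimate that presupposes $k_0\ge0$, and in justifying the finiteness of $m$ from $\beta>\alpha-2$.
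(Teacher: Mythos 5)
Your proposal is correct and follows essentially the same route as the paper: it reuses the intermediate estimate \eqref{stima-yosida} (equivalently, Remark \ref{rem3.2}), adds the nonnegative contribution $\eta\int_{\R^N}V_\eps^{p-1}|u|^p|x|^\beta\,dx$ coming from $-\widetilde L=-L+\eta|x|^\beta$, bounds $k_0|x|^{\alpha-2}+\eta|x|^\beta$ below by $m$ pointwise, and lets the $-mu$ term cancel the resulting $m\int_{\R^N}V_\eps^{p-1}|u|^p\,dx$. Your explicit remarks that \eqref{stima-yosida} does not require $\alpha\le(N-2)(p-1)$ and that $m$ is finite by continuity and coercivity are correct and, if anything, slightly more careful than the paper's own write-up.
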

\begin{proof} We proceed as in the proof of Lemma \ref{main-lem}. From Remark \ref{rem3.2} and the inequality $|x|^2 V_\eps\le 1$ it follows that
\begin{align*}
&{\mathcal Re}\langle -\widetilde Lu, |V_\eps u|^{p-2}V_\eps u\rangle \\
&\ge \beta_0\int_{\R^N}V_\eps^{p}|u|^pdx\\
&\quad + \int_{\R^N}V_\eps^{p-1}|u|^p\left(\frac{N+\alpha-2}{p}\cdot\frac{(p-1)(N-2)-\alpha}{p} \,|x|^{\alpha-2}+\eta |x|^\beta\right) dx\\
	&\quad +\bigg(\frac{4\alpha(p-1)}{p^2}-\frac{p-1}{p^2}(4+2Np-4N)\bigg)\int_{\R^N}V_\eps^p|u|^p |x|^\alpha dx\\
&\ge \beta_0\int_{\R^N}V_\eps^{p}|u|^pdx+ m \int_{\R^N}V_\eps^{p-1}|u|^p\, dx+\delta_\alpha\int_{\R^N}V_\eps^p|u|^p |x|^\alpha dx\,.
\end{align*}
Thus the proof of the lemma is concluded.
\end{proof}

Applying Corollary \ref{diss}, Lemma \ref{main-lem} and Theorem \ref{Oka-th1.7} we obtain the following generation results. We distinguish the two cases $\alpha\le 2$ and $\alpha>2$ since the hypotheses on the unperturbed operator $L$ are different.
\begin{theorem}\label{th1_1}
Assume $0\le \alpha \le 2$. Set $k=\min\{\beta_0,(p-1)\gamma_0\}$. If $2p<N$ and $ \alpha \le (N-2)(p-1)$ then, for every $c<k$ the operator $ L+\frac{c}{|x|^2}$ endowed with the domain $D_p$ defined in Theorem \ref{thm-luca-simona}
generates a contractive positive $C_0$-semigroup in $L^p(\R^N)$. Moreover, $C_c^\infty(\R^N)$ is a core for such an operator.
\noindent
Finally, the closure of $\left(L+\frac{k}{|x|^2}, D_p\right)$  generates a contractive positive $C_0$-semigroup in $L^p(\R^N)$.
\end{theorem}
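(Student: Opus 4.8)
The plan is to apply Okazawa's perturbation theorem (Theorem \ref{Oka-th1.7}) with the choice $A=-L_p$, where $L_p$ is the realization of $L$ on the domain $D_p$ of Theorem \ref{thm-luca-simona}, and $B$ the maximal multiplication operator by $|x|^{-2}$. By Theorem \ref{thm-luca-simona} the operator $A$ is $m$-accretive (it generates a contraction semigroup) and $C_c^\infty(\R^N)$ is a core for it; since $|x|^{-2}\ge 0$, the operator $B$ is $m$-accretive, and a direct computation shows that its Yosida approximation $B_\eps=B(I+\eps B)^{-1}$ is exactly the bounded multiplier $V_\eps=(|x|^2+\eps)^{-1}$ appearing in Lemma \ref{main-lem}. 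With these identifications the operator we wish to study, $-(L+c|x|^{-2})=A-cB$, is precisely $A+c'B$ in Okazawa's notation with $c'=-c$.

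Next I would verify the three hypotheses of Theorem \ref{Oka-th1.7}. For (i), multiply the inequality of Lemma \ref{main-lem} by $\|V_\eps u\|_p^{2-p}$ and use $\int_{\R^N}V_\eps^p|u|^p\,dx=\|B_\eps u\|_p^p$; since $\beta_\alpha>0$ when $N>2p$, the weighted term may be discarded, and (i) holds with $k_1=\beta_0$ and $\tilde c=a=0$. Hypothesis (ii) is immediate because $V_\eps>0$: the pairing equals $\|V_\eps u\|_p^{2-p}\int_{\R^N}V_\eps^{p-1}|u|^p\,dx\ge 0$. For (iii), Corollary \ref{diss} states that $L+(p-1)\gamma_0\,|x|^{-2}$ is dissipative on $C_c^\infty(\R^N)$, i.e.\ $A-k_2B$ is accretive with $k_2=(p-1)\gamma_0$. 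Hence $k=\min\{k_1,k_2\}=\min\{\beta_0,(p-1)\gamma_0\}$, as in the statement.

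Okazawa's theorem then yields, for every $c'>-k$, that $A+c'B$ is $m$-accretive on $D(A)=D_p$ and that every core of $A$ is a core for it; moreover $A-kB$ is essentially $m$-accretive on $D_p$. Rewriting with $c'=-c$, this says that $-(L+c|x|^{-2})$ is $m$-accretive on $D_p$ for every $c<k$, that $C_c^\infty(\R^N)$ is a core, and that $-(L+k|x|^{-2})$ with domain $D_p$ is essentially $m$-accretive. By the correspondence between negatives of $m$-accretive operators and generators of contraction semigroups, $L+c|x|^{-2}$ with domain $D_p$ generates a contraction $C_0$-semigroup with core $C_c^\infty(\R^N)$ for $c<k$, and the closure of $(L+k|x|^{-2},D_p)$ generates a contraction $C_0$-semigroup.

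It remains to upgrade contractivity to positivity. Since $c\le k\le(p-1)\gamma_0$, Proposition \ref{dispersivity} shows that $L+c|x|^{-2}$ is dispersive on the core $C_c^\infty(\R^N)$; for a generator of a contraction semigroup, dispersivity checked on a core is equivalent to positivity of the semigroup (see \cite{Na86}), so the semigroup is positivity preserving for every $c<k$. For the boundary value $c=k$ the cleanest route is to obtain the semigroup generated by the closure as a strong limit of those with $c<k$ --- which follows from the essential $m$-accretivity via strong resolvent convergence --- whence positivity is inherited in the limit; alternatively one checks that $C_c^\infty(\R^N)$ remains a core for the closure and invokes dispersivity directly. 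The main point requiring care is precisely this handling of the endpoint $c=k$, together with the sign bookkeeping in translating Okazawa's $A+c'B$ conclusion into our convention; the verification of (i)--(iii) themselves is routine given Lemma \ref{main-lem} and Corollary \ref{diss}.
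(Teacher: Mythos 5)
Your proposal is correct and follows essentially the same route as the paper: the paper's proof (given for the companion Theorem \ref{th1_2} and stated to be identical for this one) also takes $A=-L$ on $D_p$, $B$ the maximal multiplication by $|x|^{-2}$, verifies (i) via Lemma \ref{main-lem} with $k_1=\beta_0$, $\tilde c=a=0$, (iii) via Corollary \ref{diss} with $k_2=(p-1)\gamma_0$, and then invokes Lumer--Phillips and Proposition \ref{dispersivity} for positivity. Your extra care about the endpoint $c=k$ and the sign bookkeeping is a sound elaboration of details the paper leaves implicit, not a different argument.
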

\begin{theorem}\label{th1_2}
Assume $\alpha >2$. Set $k=\min\{\beta_0,(p-1)\gamma_0\}$. If $\frac{N}{N-2}<p<\frac{N}2$ and $\alpha <\frac{N(p-1)}{p}$, then for every $c<k$ the operator $ L+\frac{c}{|x|^2}$ endowed with the domain $\widehat{D_p}$ given in Theorem \ref{thm-giorgio-chiara} generates a contractive positive $C_0$-semigroup in $L^p(\R^N)$. Moreover, $C_c^\infty(\R^N)$ is a core for such an operator.
\noindent
Finally, the closure of $\left(L+\frac{k}{|x|^2}, \widehat{D_p}\right)$  generates a contractive positive $C_0$-semigroup in $L^p(\R^N)$.
\end{theorem}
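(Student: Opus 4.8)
The plan is to obtain both assertions as a direct application of Okazawa's perturbation theorem (Theorem \ref{Oka-th1.7}), with the negative of the diffusion operator playing the role of $A$ and the inverse-square multiplication playing the role of $B$. Concretely, I would set $A=-L_p$ with domain $\widehat{D_p}$ and let $B$ be multiplication by $|x|^{-2}$ with its maximal domain. Under the present hypotheses one has $\alpha<N(p-1)/p<(N-2)(p-1)$, where the last inequality is equivalent to $p>N/(N-2)$; consequently Theorem \ref{thm-giorgio-chiara} (parts 3 and 4) applies and shows that $L_p$ on $\widehat{D_p}$ generates a positive contraction semigroup, so that $A$ is $m$--accretive and, again by part 4, $D=C_c^\infty(\R^N)$ is a core of $A$. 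The operator $B$, being multiplication by a nonnegative function, is $m$--accretive in $L^p(\R^N)$, and a short computation identifies its Yosida approximation $B_\eps$ with multiplication by $V_\eps=(|x|^2+\eps)^{-1}$, precisely the function appearing in Lemma \ref{main-lem}.

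With this dictionary the three hypotheses of Theorem \ref{Oka-th1.7} should follow from the preliminary results. For (i) I would multiply the inequality of Lemma \ref{main-lem} by the nonnegative factor $\|V_\eps u\|_p^{2-p}$ and discard the term $\beta_\alpha\int_{\R^N}V_\eps^p|u|^p|x|^\alpha\,dx$, which is nonnegative because $\beta_\alpha>0$ under $N>2p$ and $\alpha<N(p-1)/p$; this yields (i) with $k_1=\beta_0$ and $\tilde c=a=0$. Condition (ii) is immediate since $V_\eps>0$ gives ${\mathcal Re}\langle u,|V_\eps u|^{p-2}V_\eps u\rangle=\int_{\R^N}V_\eps^{p-1}|u|^p\,dx\ge0$. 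For (iii), Corollary \ref{diss} shows that $L+k_2|x|^{-2}$ is dissipative on $C_c^\infty(\R^N)$ with $k_2=(p-1)\gamma_0$, so that $A-k_2B=-(L+k_2|x|^{-2})$ is accretive. Hence $k=\min\{k_1,k_2\}=\min\{\beta_0,(p-1)\gamma_0\}$ coincides with the constant in the statement.

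It then remains to apply Theorem \ref{Oka-th1.7} with the coefficient $-c$, since $-L-c|x|^{-2}=A-cB=A+(-c)B$. For $c<k$ we have $-c>-k$, whence $A-cB=-(L+c|x|^{-2})$ is $m$--accretive on $\widehat{D_p}$; equivalently $L+c|x|^{-2}$ with domain $\widehat{D_p}$ generates a contraction semigroup, and since $C_c^\infty(\R^N)$ is a core for $A$ it is a core for $A-cB$, hence for $L+c|x|^{-2}$. The endpoint assertion is exactly the last conclusion of Theorem \ref{Oka-th1.7}: $A-kB=-(L+k|x|^{-2})$ is essentially $m$--accretive on $\widehat{D_p}$, that is, the closure of $(L+k|x|^{-2},\widehat{D_p})$ generates a contraction semigroup.

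Finally, positivity would be deduced from dispersivity rather than from the perturbation theorem. Since $c\le k\le(p-1)\gamma_0$, Proposition \ref{dispersivity} shows that $L+c|x|^{-2}$ is dispersive on the core $C_c^\infty(\R^N)$, and dispersivity passes to the generator; by the standard characterization of positive contraction semigroups through dispersivity (see \cite{Na86}) the semigroups generated for $c<k$ and by the closure at $c=k$ are positivity preserving. The only genuinely delicate points I expect are bookkeeping ones: getting the sign convention right in the identification $A+(-c)B$, checking that the chain $\alpha<N(p-1)/p<(N-2)(p-1)$ indeed places us inside the hypotheses of Theorem \ref{thm-giorgio-chiara}, Corollary \ref{diss} and Lemma \ref{main-lem}, and confirming that accretivity and dispersivity, established only on the core $C_c^\infty(\R^N)$, extend to the full domain $\widehat{D_p}$. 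The analytic heart of the argument has already been carried out in Lemma \ref{main-lem}.
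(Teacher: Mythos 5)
Your proposal is correct and follows essentially the same route as the paper: both apply Okazawa's Theorem \ref{Oka-th1.7} with $A=-L$ on $\widehat{D_p}$, $B$ the multiplication by $|x|^{-2}$, taking $k_1=\beta_0$ from Lemma \ref{main-lem} (discarding the nonnegative $\beta_\alpha$-term), $k_2=(p-1)\gamma_0$ from Corollary \ref{diss}, the core property from Theorem \ref{thm-giorgio-chiara}, and positivity from the dispersivity of Proposition \ref{dispersivity}. Your added checks (the chain $\alpha<N(p-1)/p<(N-2)(p-1)$ and the normalizing factor $\|V_\eps u\|_p^{2-p}$) are correct details that the paper leaves implicit.
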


\noindent
The proofs of the two above theorems are identical. We limit ourselves in proving the latter.
\begin{proof}[Proof of Theorem \ref{th1_2}]
In order to apply Theorem \ref{Oka-th1.7}, set $A=-L$, $D(A)=\widehat{D_p}$, $D=C_c^\infty(\R^N)$ and let $B$ be the multiplicative operator by $\frac{1}{|x|^2}$ endowed with the maximal domain $D(|x|^{-2})=\{u\in L^p(\R^N);\,|x|^{-2}u\in L^p(\R^N)\}$
in $L^p(\R^N)$. We observe that the Yosida approximation $B_\eps$ of $B$ is the multiplicative operator by $V_\eps=\frac{1}{|x|^2+\eps}$. Both $A$ and $B$ are $m$--accretive in $L^p(\R^N)$. Then, Lemma \ref{main-lem} yields $(i)$ in Theorem \ref{Oka-th1.7} with $k_1=\beta_0$, $\tilde{c}=0$ and $a=0$. The second assumption $(ii)$ in Theorem \ref{Oka-th1.7} is obviously satisfied. The last one, $(iii)$, holds with $k_2=(p-1)\gamma_0$ thanks to Corollary \ref{diss}. Then, we infer that for every $c <k$, $-L-\frac{c}{|x|^2}$ with domain $\widehat{D_p}$ is $m$--accretive in $L^p(\R^N)$ and $C_c^\infty(\R^N)$ is a core for $-L- \frac{c}{|x|^2}$ by Theorem \ref{thm-giorgio-chiara}. Moreover, $-L-\frac{k}{|x|^2}$ is essentially $m$--accretive. By the Lumer Phillips Theorem (cf. \cite[Chap.II, Theorem 3.15]{EN}) we obtain the generation result. Finally, the positivity of the semigroup is a consequence of Proposition \ref{dispersivity}. The dispersivity is equivalent to the positivity of the resolvent, which is equivalent to the positivity of the semigroup.
\end{proof}


If $2p\ge N$, then $\beta_0\le 0$ and we cannot apply Theorem \ref{Oka-th1.7}. However, if at least $\beta_\alpha\ge 0$, that is $2p-N\le \alpha$, then we still have a generation result, relying on the following abstract theorem by Okazawa (see \cite[Theorem 1.6]{Oka96}).

\begin{theorem} \label{Oka-th1.6}
Let $A$ and $B$ be linear $m$--accretive operators in $L^p(\R^N)$, $1<p<+\infty$. Let $D$ be a core of $A$. Assume that there are constants $\tilde{c}, a, b\ge 0$ such that for all $u\in D$ and $\eps >0$,
$$
{\mathcal Re}\langle Au, \|B_\eps u\|_p^{2-p}|B_\eps u|^{p-2}B_\eps u\rangle \ge -b\|B_\eps u\|_p^2-\tilde{c}\|u\|_p^2-a\|B_\eps u\|_p\|u\|_p,
$$
where $B_\eps :=B(I+\eps B)^{-1}$ denotes the Yosida approximation of $B$. If $\nu >b$ then $A+\nu B$ with domain $D(A)\cap D(B)$ is $m$--accretive and $D(A)\cap D(B)$ is core for $A$. Moreover, $A+bB$ is essentially $m$--accretive on $D(A)\cap D(B)$.
\end{theorem}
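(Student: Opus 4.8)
The plan is to prove the statement by the standard regularize-and-pass-to-the-limit scheme for $m$-accretive perturbations. The two tools are the Yosida approximation $B_\eps=B(I+\eps B)^{-1}$ (which is bounded, $m$-accretive, and satisfies $B_\eps u\to Bu$ for $u\in D(B)$) and the normalized duality map $F(w)=\|w\|_p^{2-p}|w|^{p-2}\bar w$, for which $\langle w,F(w)\rangle=\|w\|_p^2$ and $\|F(w)\|_{p'}=\|w\|_p$. With this notation the hypothesis reads ${\mathcal Re}\langle Au,F(B_\eps u)\rangle\ge -b\|B_\eps u\|_p^2-\tilde c\|u\|_p^2-a\|B_\eps u\|_p\|u\|_p$ for $u\in D$ and $\eps>0$, and by density (since $D$ is a core of $A$) the same inequality extends to all $u\in D(A)$.

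First I would regularize. Since $B_\eps$ is bounded and $m$-accretive and $A$ is $m$-accretive, the operator $A+\nu B_\eps$ with domain $D(A)$ is again $m$-accretive: accretivity is clear as a sum of accretive operators, while the range condition $R(\lambda+A+\nu B_\eps)=L^p(\R^N)$ follows for $\lambda>0$ by inverting $I+\nu(\lambda+A)^{-1}B_\eps$ through a Neumann series for large $\lambda$, and then for all $\lambda>0$. Fixing $f\in L^p(\R^N)$ and $\lambda>0$, I set $u_\eps=(\lambda+A+\nu B_\eps)^{-1}f$, so that $\|u_\eps\|_p\le\lambda^{-1}\|f\|_p$ uniformly in $\eps$, and I record the resolvent identity $\lambda u_\eps+Au_\eps+\nu B_\eps u_\eps=f$.

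The heart of the argument is the uniform bound on $\|B_\eps u_\eps\|_p$. Pairing the resolvent identity with $F(B_\eps u_\eps)$, taking real parts, and using $\langle B_\eps u_\eps,F(B_\eps u_\eps)\rangle=\|B_\eps u_\eps\|_p^2$, the hypothesis for the $A$-term, the elementary bound ${\mathcal Re}\langle u_\eps,F(B_\eps u_\eps)\rangle\ge -\|u_\eps\|_p\|B_\eps u_\eps\|_p$, and ${\mathcal Re}\langle f,F(B_\eps u_\eps)\rangle\le\|f\|_p\|B_\eps u_\eps\|_p$, I arrive at
\[
(\nu-b)\|B_\eps u_\eps\|_p^2\le\left(\|f\|_p+(\lambda+a)\|u_\eps\|_p\right)\|B_\eps u_\eps\|_p+\tilde c\|u_\eps\|_p^2 .
\]
Because $\nu>b$, this quadratic inequality forces $\|B_\eps u_\eps\|_p\le C(f,\lambda)$ independently of $\eps$; this is exactly the place where the assumption $\nu>b$ is indispensable. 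Next I would pass to the limit. By reflexivity of $L^p(\R^N)$ I extract a subsequence with $u_\eps\rightharpoonup u$ and $B_\eps u_\eps\rightharpoonup w$; writing $u_\eps=(I+\eps B)^{-1}u_\eps+\eps B_\eps u_\eps$ shows $(I+\eps B)^{-1}u_\eps\rightharpoonup u$ while $B(I+\eps B)^{-1}u_\eps=B_\eps u_\eps\rightharpoonup w$, so weak closedness of the graph of $B$ gives $u\in D(B)$ and $Bu=w$. Then $Au_\eps=f-\lambda u_\eps-\nu B_\eps u_\eps\rightharpoonup f-\lambda u-\nu Bu$, and weak closedness of the graph of $A$ gives $u\in D(A)$ and $\lambda u+Au+\nu Bu=f$. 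Hence $R(\lambda+A+\nu B)=L^p(\R^N)$ for every $\lambda>0$, and since $A+\nu B$ is accretive on $D(A)\cap D(B)$ it is $m$-accretive.

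Finally, for the core assertion I would use the characterization that a subspace $D_0\subseteq D(A)$ is a core for the $m$-accretive operator $A$ if and only if $(\lambda+A)D_0$ is dense in $L^p(\R^N)$; writing $(\lambda+A)w=(\lambda+A+\nu B)w-\nu Bw$ identifies $(\lambda+A)(D(A)\cap D(B))$ with $R\left(I-\nu B(\lambda+A+\nu B)^{-1}\right)$, whose density I would obtain from the strong convergence of the regularized resolvents together with the uniform estimates above. For the essential $m$-accretivity of $A+bB$ I would treat the borderline $\nu=b$ by applying the result with $\nu=b+\delta$: this yields $u_\delta$ with $(\lambda+A+bB)u_\delta=f-\delta Bu_\delta$, and the displayed estimate shows that $\delta\|Bu_\delta\|_p$ stays bounded; letting $\delta\downarrow0$ along the decreasing family $A+(b+\delta)B$ and invoking monotone convergence of resolvents of $m$-accretive operators identifies the limit with the resolvent of $\overline{A+bB}$, so that $R(\lambda+A+bB)$ is dense and $A+bB$ is essentially $m$-accretive. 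I expect the two genuinely delicate points to be the uniform control of $\|B_\eps u_\eps\|_p$ (where $\nu>b$ enters and where the estimate degenerates precisely at the threshold) and this limit $\delta\downarrow 0$, which is the hardest part of the argument.
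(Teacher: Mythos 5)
First, a point of comparison: the paper does not actually prove this statement --- it is imported verbatim from Okazawa \cite[Theorem 1.6]{Oka96} and used as a black box, so there is no in-paper proof to measure yours against. That said, your scheme for the main assertion (the $m$--accretivity of $A+\nu B$ for $\nu>b$) is the right one and is essentially Okazawa's: regularize $B$ by its Yosida approximation, solve $(\lambda+A+\nu B_\eps)u_\eps=f$, pair with the duality section of $B_\eps u_\eps$ to obtain the uniform bound $(\nu-b)\|B_\eps u_\eps\|_p^2\le\big(\|f\|_p+(\lambda+a)\|u_\eps\|_p\big)\|B_\eps u_\eps\|_p+\tilde c\|u_\eps\|_p^2$, and pass to the limit using reflexivity and the weak closedness of the graphs of $A$ and $B$. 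This part is correct; the extension of the hypothesis from $D$ to $D(A)$ is legitimate because $B_\eps$ is bounded and the normalized duality map is continuous on $L^p$, $1<p<\infty$.

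The genuine gaps are in the last two conclusions. For the core property you reduce to the density of $R\big(I-\nu B(\lambda+A+\nu B)^{-1}\big)$, but your own estimate gives only $\|\nu B(\lambda+A+\nu B)^{-1}\|\le \frac{\nu}{\nu-b}\left(2+\frac{a}{\lambda}\right)+O(\lambda^{-1})$, which is never below $1$; hence no Neumann series or contraction argument applies, and ``strong convergence of the regularized resolvents'' does not by itself yield density of that range --- an additional idea is needed here. For the essential $m$--accretivity of $A+bB$, taking $\nu=b+\delta$ your estimate yields only $\delta\|Bu_\delta\|_p\le\left(2+\frac{a}{\lambda}\right)\|f\|_p+o(1)$ as $\delta\downarrow0$, i.e.\ boundedness but not convergence to zero; consequently the identity $(\lambda+A+bB)u_\delta=f-\delta Bu_\delta$ does not place $f$ in the closure of $R(\lambda+A+bB)$. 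Moreover, ``monotone convergence of resolvents'' is a Hilbert-space/form-theoretic device, and the family $A+(b+\delta)B$ carries no order structure in $L^p$ that would justify invoking it. One way to close this second gap would be to show that $\delta Bu_\delta\rightharpoonup0$ weakly, which suffices because the range is a linear subspace and weak and norm closures of convex sets coincide; but as written, both of these final steps are holes rather than omitted routine details.
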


In our framework the above result leads to the following theorems. We recall that $D(|x|^{-2})=\{u\in L^p(\R^N);\,|x|^{-2}u\in L^p(\R^N)\}$.
\begin{theorem}\label{th1_1bis}
Assume $0\le \alpha \le 2$. If $2p\ge N$ and $ 2p-N\le \alpha \le (N-2)(p-1)$ then, for every $c<\beta_0$ the operator $ L+\frac{c}{|x|^2}$ endowed with the domain $D_p\cap D(|x|^{-2})$, where $D_p$ is defined in Theorem \ref{thm-luca-simona},
generates a contractive analytic $C_0$-semigroup in $L^p(\R^N)$.
\noindent
Moreover, the closure of $\left(L+\frac{\beta_0}{|x|^2}, D_p\cap D(|x|^{-2})\right)$  generates a contractive analytic $C_0$-semigroup in $L^p(\R^N)$.
\end{theorem}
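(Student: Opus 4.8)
The plan is to run exactly the scheme used in the proof of Theorem \ref{th1_2}, but with Okazawa's Theorem \ref{Oka-th1.6} in place of Theorem \ref{Oka-th1.7}, since in the regime $2p\ge N$ the favourable constant $\beta_0$ is no longer positive and only the weaker one-sided bound survives. I set $A=-L$ with $D(A)=D_p$ (from Theorem \ref{thm-luca-simona}), let $B$ be multiplication by $|x|^{-2}$ with maximal domain $D(|x|^{-2})$, and take $D=C_c^\infty(\R^N)$. The operator $B$ is $m$--accretive as multiplication by a nonnegative function, while $A$ is $m$--accretive because $L$ is dissipative on the core $C_c^\infty(\R^N)$ (Corollary \ref{diss} with $c=0$) and generates a semigroup by Theorem \ref{thm-luca-simona}, hence is $m$--dissipative; moreover $D$ is a core of $A$ and the Yosida approximation $B_\eps$ is multiplication by $V_\eps=(|x|^2+\eps)^{-1}$.

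The first step is to verify the hypothesis of Theorem \ref{Oka-th1.6}. Using $\|B_\eps u\|_p^p=\int_{\R^N}V_\eps^p|u|^p\,dx$, Lemma \ref{main-lem} rewrites as
\begin{equation*}
{\mathcal Re}\langle Au,\|B_\eps u\|_p^{2-p}|B_\eps u|^{p-2}B_\eps u\rangle \ge \beta_0\|B_\eps u\|_p^2+\beta_\alpha\|B_\eps u\|_p^{2-p}\int_{\R^N}V_\eps^p|u|^p|x|^\alpha\,dx.
\end{equation*}
The assumption $2p-N\le\alpha\le(N-2)(p-1)$ forces $\beta_\alpha\ge 0$ (the factor $Np-N-\alpha=N(p-1)-\alpha$ is positive and $N+\alpha-2p\ge 0$), so the last integral may be dropped, leaving ${\mathcal Re}\langle Au,\dots\rangle\ge\beta_0\|B_\eps u\|_p^2$. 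Since here $\beta_0\le 0$, this is precisely the estimate required in Theorem \ref{Oka-th1.6} with $b=-\beta_0\ge 0$ and $a=\tilde c=0$. Applying that theorem with $\nu=-c$ (so that $\nu>b$ is equivalent to $c<\beta_0$) shows that $A+\nu B=-(L+c|x|^{-2})$ is $m$--accretive on $D(A)\cap D(B)=D_p\cap D(|x|^{-2})$, and that $A+bB=-(L+\beta_0|x|^{-2})$ is essentially $m$--accretive there. By the Lumer--Phillips theorem this gives the contraction $C_0$-semigroup for every $c<\beta_0$ and, passing to the closure, the case $c=\beta_0$.

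It remains to obtain analyticity, which I regard as the main obstacle, since Theorem \ref{Oka-th1.6} only yields $m$--accretivity. Here I would exploit that for $\alpha\le 2$ the unperturbed operator $L$ is itself sectorial (Theorem \ref{thm-luca-simona}) together with the fact that the potential is real. The multiplication operator $c|x|^{-2}$ is real, so $\langle c|x|^{-2}u,u|u|^{p-2}\rangle=c\int_{\R^N}|x|^{-2}|u|^p\,dx\in\R$ and contributes nothing to the imaginary part; hence ${\mathcal Im}\langle (L+c|x|^{-2})u,u|u|^{p-2}\rangle={\mathcal Im}\langle Lu,u|u|^{p-2}\rangle$. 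On the other hand $c\le\beta_0\le 0$, so the real potential only makes the real part more negative, giving $-{\mathcal Re}\langle (L+c|x|^{-2})u,u|u|^{p-2}\rangle\ge -{\mathcal Re}\langle Lu,u|u|^{p-2}\rangle\ge 0$, the last inequality by dissipativity of $L$. Consequently any sector estimate $|{\mathcal Im}\langle Lu,u|u|^{p-2}\rangle|\le\tan\theta_0\,(-{\mathcal Re}\langle Lu,u|u|^{p-2}\rangle)$ with $\theta_0<\pi/2$ available for $L$ transfers to $L+c|x|^{-2}$, placing the numerical range of $-(L+c|x|^{-2})$ in a sector of half-angle $\theta_0$; combined with the $m$--accretivity already proved this yields $m$--sectoriality, and hence generation of a bounded analytic semigroup, with the same argument on the core handling the closure at $c=\beta_0$.

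The delicate point I anticipate is exactly this last transfer: extracting from the bare analyticity statement of Theorem \ref{thm-luca-simona} a usable quantitative numerical-range sector estimate for $L$ on $C_c^\infty(\R^N)$, extending it by density to $D_p\cap D(|x|^{-2})$, and then rigorously upgrading ``$m$--accretive $+$ numerical range in a sector'' to an analytic $C_0$-semigroup in $L^p$ (rather than $L^2$). This forces one to revisit the sectoriality computations underlying Theorem \ref{thm-luca-simona} instead of using its statement as a black box, and to check that the duality pairing $u\mapsto u|u|^{p-2}$ used there matches the one appearing in the Okazawa scheme.
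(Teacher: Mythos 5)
Your proposal is correct and follows essentially the same route as the paper: the paper also applies Okazawa's Theorem \ref{Oka-th1.6} with $A=-L$, $B$ multiplication by $|x|^{-2}$, $b=-\beta_0$, $\tilde c=a=0$ (discarding the $\beta_\alpha$-term, which is nonnegative precisely because $2p-N\le\alpha$), and then obtains analyticity exactly by your ``transfer'' argument, noting that the real potential $c|x|^{-2}$ with $c\le\beta_0\le 0$ leaves the imaginary part of $\langle (L+c|x|^{-2})u,|u|^{p-2}u\rangle$ unchanged while only decreasing the real part. The ``delicate point'' you flag is resolved in the paper by quoting the quantitative numerical-range estimate $|{\mathcal Im}\langle Lu,|u|^{p-2}u\rangle|\le \ell_\alpha(-{\mathcal Re}\langle Lu,|u|^{p-2}u\rangle)$ from the proofs of the generation theorems for $L$ (extended from the core $C_c^\infty(\R^N)$ by density) together with \cite[Chapter 2, Theorem 4.6]{EN} to pass from dissipativity of $e^{\pm i\theta}(L+c|x|^{-2})$ to sectoriality.
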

\begin{theorem} \label{th1_2bis}
Assume $\alpha >2$. If $2p\ge N$ and $ 2p-N\le \alpha <\frac{N(p-1)}{p}$, then for every $c<\beta_0$ the operator $ L+\frac{c}{|x|^2}$ endowed with the domain $\widehat{D_p}\cap D(|x|^{-2})$, where $\widehat{D_p}$ is given in Theorem \ref{thm-giorgio-chiara}, generates a contractive analytic $C_0$-semigroup in $L^p(\R^N)$. Moreover, the closure of $\left(L+\frac{\beta_0}{|x|^2}, \widehat{D_p}\cap D(|x|^{-2})\right)$  generates a contractive analytic $C_0$-semigroup in $L^p(\R^N)$.
\end{theorem}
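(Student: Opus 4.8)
The plan is to follow the proof of Theorem \ref{th1_2} almost verbatim, but to replace the perturbation theorem \ref{Oka-th1.7} by Theorem \ref{Oka-th1.6}, which does not require $\beta_0>0$: the present hypothesis $2p\ge N$ forces $\beta_0\le 0$, so now $-\beta_0$ plays the role of the constant $b$. First I would record the parameter bookkeeping. From $\alpha>2$ and $\alpha<\frac{N(p-1)}{p}$ one gets $\frac{N(p-1)}{p}>2$, hence $p>\frac{N}{N-2}$, and therefore $\frac{N(p-1)}{p}<(p-1)(N-2)$, so that $2<\alpha<\frac{N(p-1)}{p}<(p-1)(N-2)$. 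Consequently Theorem \ref{thm-giorgio-chiara} applies: by item~4 the maximal domain $D_{\rm max}$ equals $\widehat{D_p}$ and $C_c^\infty(\R^N)$ is a core, while by item~3 the operator $L_p$ with domain $\widehat{D_p}$ generates a contraction semigroup and, since $\alpha<(p-1)(N-2)$, this semigroup is \emph{analytic}. In particular $A:=-L$, with $D(A)=\widehat{D_p}$ and core $D=C_c^\infty(\R^N)$, is $m$--accretive.

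Next I would set $B$ equal to multiplication by $|x|^{-2}$ with $D(B)=D(|x|^{-2})$, so that $B_\eps$ is multiplication by $V_\eps$ and $B$ is $m$--accretive. To verify the hypothesis of Theorem \ref{Oka-th1.6} I would invoke Lemma \ref{main-lem}: since $2p-N\le\alpha<N(p-1)$ we have $N+\alpha-2p\ge0$ and $Np-N-\alpha>0$, whence $\beta_\alpha\ge0$. Dropping the nonnegative $\beta_\alpha$--term in \eqref{main-inequality} and multiplying by $\|V_\eps u\|_p^{2-p}\ge0$ gives, for all $u\in C_c^\infty(\R^N)$ and $\eps>0$,
$$
{\mathcal Re}\langle -Lu,\ \|V_\eps u\|_p^{2-p}|V_\eps u|^{p-2}V_\eps u\rangle\ \ge\ \beta_0\,\|V_\eps u\|_p^2,
$$
which is precisely the inequality required in Theorem \ref{Oka-th1.6} with $b=-\beta_0\ge0$ and $\tilde c=a=0$. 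Taking $\nu=-c$, the assumption $c<\beta_0$ reads $\nu>b$; hence $A+\nu B=-L-\frac{c}{|x|^2}$ with domain $\widehat{D_p}\cap D(|x|^{-2})$ is $m$--accretive, and for $c=\beta_0$ the operator $-L-\frac{\beta_0}{|x|^2}$ is essentially $m$--accretive on that domain, so its closure is $m$--accretive. By the Lumer--Phillips theorem (\cite[Chap.II, Theorem~3.15]{EN}) both $L+\frac{c}{|x|^2}$ and the closure of $L+\frac{\beta_0}{|x|^2}$ generate contraction $C_0$--semigroups.

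It remains to upgrade contractivity to analyticity. The key observation is that $c<\beta_0\le0$, so the perturbation $-\frac{c}{|x|^2}=\frac{|c|}{|x|^2}$ is a \emph{nonnegative real} multiplication operator; hence for every $u\in\widehat{D_p}\cap D(|x|^{-2})$
$$
\langle (-L-\tfrac{c}{|x|^2})u,\ u|u|^{p-2}\rangle=\langle -Lu,\ u|u|^{p-2}\rangle+|c|\int_{\R^N}\frac{|u|^p}{|x|^2}\,dx,
$$
where the last term is real and $\ge0$. Thus the perturbation increases the real part of the duality pairing and leaves its imaginary part unchanged. Since $-L$ generates a contractive analytic semigroup, its $L^p$--numerical range lies in a sector of half--angle $\theta_0<\pi/2$, i.e. $|{\mathcal Im}\langle -Lu,u|u|^{p-2}\rangle|\le\tan\theta_0\,{\mathcal Re}\langle -Lu,u|u|^{p-2}\rangle$; adding the nonnegative real term above can only enlarge the denominator and thus shrink the sector, so the numerical range of $-L-\frac{c}{|x|^2}$ is contained in the same sector. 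Being moreover $m$--accretive, this operator is $m$--sectorial and therefore generates a bounded analytic semigroup; in the boundary case $c=\beta_0$ the sector estimate holds on the core $\widehat{D_p}\cap D(|x|^{-2})$ and passes to the closure.

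The main obstacle I anticipate is exactly this last step: making rigorous, for $p\neq2$, the passage from analyticity of the $L$--semigroup to a genuine sectoriality (numerical--range) estimate for $-L$. One should either extract that sector estimate from the analyticity proof in \cite{Met_Spi}, where it is obtained by bounding the imaginary part of $\langle -Lu,u|u|^{p-2}\rangle$ against its real part, or reprove it directly from the computation underlying Lemma \ref{main-lem} with the imaginary contribution $\frac{2i}{p}\int_{\R^N} u_\delta^{p-2}R^{2-p}\,{\mathcal Im}(\bar u\,\nabla u)\cdot\nabla Q\,(1+|x|^\alpha)\,dx$ retained. A further minor point is that, since $2p\ge N$, one has $C_c^\infty(\R^N)\not\subset D(|x|^{-2})$, so all the pairing identities above must be read either through the Yosida regularization $V_\eps$ (as in Lemma \ref{main-lem}) or directly on $\widehat{D_p}\cap D(|x|^{-2})$.
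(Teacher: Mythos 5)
Your proposal is correct and follows essentially the same route as the paper: the same application of Lemma \ref{main-lem} (dropping the nonnegative $\beta_\alpha$-term) together with Theorem \ref{Oka-th1.6} with $b=-\beta_0$, and the same analyticity argument via a quantitative sector estimate $|{\mathcal Im}\langle Lu,|u|^{p-2}u\rangle|\le \ell_\alpha(-{\mathcal Re}\langle Lu,|u|^{p-2}u\rangle)$ extracted from \cite{Met_Spi}, which is preserved under the nonpositive real perturbation $\frac{c}{|x|^2}$, $c\le\beta_0\le 0$. The difficulty you flag in the last step is exactly the one the paper resolves by inspecting the proof of Theorem 8.1 in \cite{Met_Spi}, so no gap remains.
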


As before, we limit ourselves in proving the latter.
\begin{proof}[Proof of Theorem \ref{th1_2bis}]
In order to apply Theorem \ref{Oka-th1.6}, set $A=-L$, $D(A)=\widehat{D_p}$, $D=C_c^\infty(\R^N)$ and let $B$ be the multiplicative operator by $\frac{1}{|x|^2}$ endowed with the maximal domain $D(|x|^{-2})$ in $L^p(\R^N)$. Both $A$ and $B$ are $m$--accretive in $L^p(\R^N)$. Then, Lemma \ref{main-lem} and Theorem \ref{Oka-th1.6} (with $b=-\beta_0$, $\tilde{c}=0$ and $a=0$) imply that $ \Big(L+\frac{c}{|x|^2},\widehat{D_p}\cap D(|x|^{-2})\Big)$ is $m$--accretive in $L^p(\R^N)$ for any $c<\beta_0$ and is essentially $m$--accretive if $c=\beta_0$. From the assumptions  $2<\alpha <\frac{N(p-1)}{p}$ it follows that $p>N/(N-2)$ and this yields $\alpha <(N-2)(p-1)$. Therefore,  by Theorem \ref{thm-giorgio-chiara}, $L$ generates a positive $C_0$-semigroup of contractions, which is also analytic. By inspecting the proof of \cite[Theorem 8.1]{Met_Spi} it turns out that there exists $\ell_\alpha>0$ such that
$$
|{\mathcal Im}\langle Lu, |u|^{p-2}u\rangle|\le \ell_\alpha\,    \left(-{\mathcal Re}\langle Lu, |u|^{p-2}u\rangle\right)
$$
for every $u\in  \widehat{D_p}$ (the computations can be performed for $u\in C_c^\infty(\R^N)$ and then one get the estimate for $u\in \widehat{D_p}$ using the fact that $C_c^\infty(\R^N)$ is a core for $L$). Now, the previous estimate continues to hold for all $u\in \widehat{D_p}\cap D(|x|^{-2})$
replacing $L$ with $L+\frac{c}{|x|^2}$, $c\le \beta_0$. This implies that $e^{\pm i\theta} \left(L+\frac{c}{|x|^2}\right)$ is dissipative, where $\cot\theta=\ell_\alpha$. By \cite[Theorem 4.6, Chapter 2]{EN}, it follows that $L+\frac{c}{|x|^2}$ is sectorial and hence generates an analytic semigroup in $L^p(\R^N)$. This ends the proof.
\end{proof}

If we consider the operator $\widetilde{L}$ instead of $L$ the above conditions on $p$ can be simplified. So, by Theorem \ref{thm:CRT}, Proposition \ref{p:tilde} and Lemma \ref{l:tilde}, we can apply Theorem \ref{Oka-th1.7} (Theorem \ref{Oka-th1.6}, respectively) since $\delta_\alpha \ge 0$ if and only if $\alpha \ge 1+\frac{N}{2}(p-2)$.
\begin{theorem}
Assume  $\beta >\alpha -2>0$ and $\eta>0$. Set $k=\min\{\beta_0,(p-1)\gamma_0\}$. If $\alpha \ge 1+\frac{N}{2}(p-2)$ and $N>2p$ then for every $c<k$, the operator $\widetilde{L}+\frac{c}{|x|^2}$ endowed with the domain $\widetilde{D_p}$ given in Theorem \ref{thm:CRT} generates a positive and quasi-contractive $C_0$-semigroup in $L^p(\R^N)$. Moreover, $C_c^\infty(\R^N)$ is a core for such an operator.
\noindent
Finally, the closure of $\left(\widetilde{L}+\frac{k}{|x|^2}, \widetilde{D_p}\right)$  generates a positive and quasi-contractive $C_0$-semigroup in $L^p(\R^N)$.
\end{theorem}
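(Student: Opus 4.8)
The plan is to mirror the proof of Theorem \ref{th1_2}, applying Okazawa's perturbation result (Theorem \ref{Oka-th1.7}) to the pair $A,B$, where $B$ is multiplication by $|x|^{-2}$ with maximal domain $D(|x|^{-2})$ (so that its Yosida approximation $B_\eps$ is multiplication by $V_\eps=\frac{1}{|x|^2+\eps}$) and $A$ is a suitable $m$--accretive realization of $-\widetilde L$. The only structural difference with the unperturbed case is that Theorem \ref{thm:CRT} provides merely \emph{quasi}-contractivity, so $-\widetilde L$ is only quasi-$m$--accretive. First I would therefore fix a shift $\omega\ge 0$ and set $A=-\widetilde L+\omega$ with $D(A)=\widetilde{D_p}$ and $D=C_c^\infty(\R^N)$: since, by Theorem \ref{thm:CRT}, $\widetilde L$ generates a quasi-contractive semigroup and $C_c^\infty(\R^N)$ is a core for $\widetilde L_p$, for $\omega$ large enough $A$ is $m$--accretive and $D$ is a core of $A$. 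Both $A$ and $B$ are then $m$--accretive in $L^p(\R^N)$.

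Next I would verify the three hypotheses of Theorem \ref{Oka-th1.7}. Writing $A=(-\widetilde L-m)+(\omega+m)$ with $m$ as in Lemma \ref{l:tilde}, and using $\int_{\R^N}V_\eps^{p}|u|^p\,dx=\|B_\eps u\|_p^p$ together with the nonnegativity ${\mathcal Re}\langle u,|V_\eps u|^{p-2}V_\eps u\rangle=\int_{\R^N}V_\eps^{p-1}|u|^p\,dx\ge 0$, Lemma \ref{l:tilde} gives, as soon as $\omega+m\ge 0$,
\begin{equation*}
{\mathcal Re}\langle Au,\, \|B_\eps u\|_p^{2-p}|B_\eps u|^{p-2}B_\eps u\rangle \ge \beta_0\|B_\eps u\|_p^2 + \delta_\alpha\,\|B_\eps u\|_p^{2-p}\int_{\R^N}V_\eps^p|u|^p|x|^\alpha\,dx.
\end{equation*}
Here the assumption $\alpha\ge 1+\frac{N}{2}(p-2)$ forces $\delta_\alpha\ge 0$, so the last term may be dropped and hypothesis (i) holds with $k_1=\beta_0$, $\tilde c=a=0$; the assumption $N>2p$ ensures $\beta_0>0$. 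Hypothesis (ii) is exactly the displayed nonnegativity. For (iii) I would invoke Proposition \ref{p:tilde}(i): applied with $V=(p-1)\gamma_0|x|^{-2}$ it yields quasi-dissipativity of $\widetilde L+(p-1)\gamma_0|x|^{-2}$, that is accretivity of $A-k_2B=-(\widetilde L+(p-1)\gamma_0|x|^{-2})+\omega$ with $k_2=(p-1)\gamma_0$, once $\omega$ is enlarged to absorb the corresponding quasi-dissipativity constant.

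With $k=\min\{\beta_0,(p-1)\gamma_0\}$, Theorem \ref{Oka-th1.7} then gives that $A+c'B$ is $m$--accretive for every $c'>-k$ with $D(A+c'B)=\widetilde{D_p}$, that any core of $A$ is a core for it, and that $A-kB$ is essentially $m$--accretive on $\widetilde{D_p}$. Undoing the identification $c'=-c$ and the shift $\omega$ (which merely turns contractivity into $e^{\omega t}$-quasi-contractivity), this says precisely that for every $c<k$ the operator $\widetilde L+\frac{c}{|x|^2}$ with domain $\widetilde{D_p}$ generates a quasi-contractive $C_0$-semigroup, that $C_c^\infty(\R^N)$ is a core, and that the closure of $\big(\widetilde L+\frac{k}{|x|^2},\widetilde{D_p}\big)$ generates a quasi-contractive $C_0$-semigroup. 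Finally, positivity would be obtained as in Theorem \ref{th1_2}: repeating the computation of Proposition \ref{dispersivity} with $\widetilde L$ in place of $L$ and $u$ replaced by $u_+$, the extra contribution of the drift is $-\eta\int_{\R^N}|x|^\beta u_+^{p}\,dx\le 0$, hence harmless, so $\widetilde L+\frac{c}{|x|^2}$ is quasi-dispersive on $C_c^\infty(\R^N)$; quasi-dispersivity is equivalent to positivity of the resolvent and therefore of the semigroup, cf. \cite{Na86}.

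The main obstacle is the bookkeeping forced by quasi-contractivity: one must select a single shift $\omega$ large enough to make $A$ be $m$--accretive and to validate simultaneously (i) (via $\omega+m\ge 0$, with $m$ possibly negative) and (iii) (via the quasi-dissipativity constant of Proposition \ref{p:tilde}(i)), and then check that removing this shift affects the conclusions only by replacing contraction with quasi-contraction. The second delicate point is that, unlike the $L$ case, no ready-made dispersivity statement for $\widetilde L+V$ is available, so positivity requires re-running the $u_+$-computation and observing that the singular drift term $-\eta|x|^\beta$ only improves the relevant inequality.
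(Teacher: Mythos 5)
Your proposal follows the same route the paper intends: the paper gives no separate proof of this theorem, merely noting that it follows from Theorem \ref{thm:CRT}, Proposition \ref{p:tilde} and Lemma \ref{l:tilde} via Theorem \ref{Oka-th1.7}, using that $\delta_\alpha\ge 0$ exactly when $\alpha\ge 1+\frac N2(p-2)$; your verification of hypotheses (i)--(iii) with the shift $\omega$, $k_1=\beta_0$, $k_2=(p-1)\gamma_0$ and the sign discussion of $m$ and $\delta_\alpha$ is precisely the intended argument, carried out in more detail than the paper itself.

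One step is stated too loosely: for positivity you say that repeating the computation of Proposition \ref{dispersivity} with $\widetilde L$ in place of $L$ works because the drift contributes $-\eta\int_{\R^N}|x|^\beta u_+^p\,dx\le 0$ and is ``harmless.'' But Proposition \ref{dispersivity} concludes only under $\alpha\le (N-2)(p-1)$, which is \emph{not} among the hypotheses of the present theorem (e.g.\ $N=10$, $p=2$, $\alpha=20$ is admissible here but violates it). In that regime the cross term $\alpha I_{\alpha,+}J_{\alpha,+}$ cannot be absorbed by $-(p-1)I_{\alpha,+}^2$ alone, and the drift is not merely harmless but essential: one must run the $u_+$-version of the computation in Proposition \ref{p:tilde}(i), using Young's inequality to bound $\alpha I_{\alpha,+}J_{\alpha,+}\le \eps I_{\alpha,+}^2+\frac{\alpha^2}{4\eps}\int_{\R^N}|x|^{\alpha-2}u_+^p\,dx$ and then absorbing $\frac{\alpha^2}{4\eps}|x|^{\alpha-2}-\eta|x|^\beta\le M$ thanks to $\beta>\alpha-2$. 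This is exactly where the ``quasi'' in quasi-dispersivity comes from; with that correction the positivity argument is complete and the rest of your proof stands.
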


\begin{theorem}
Assume  $\beta >\alpha -2>0$ and $\eta>0$. If $\alpha \ge 1+\frac{N}{2}(p-2)$ and $N\le 2p$ then for every $c<\beta_0$, the operator $\widetilde{L}+\frac{c}{|x|^2}$ endowed with the domain $\widetilde{D_p}\cap D(|x|^{-2})$ generates a quasi-contractive $C_0$-semigroup in $L^p(\R^N)$. Moreover, the closure of $\left(\widetilde{L}+\frac{\beta_0}{|x|^2}, \widetilde{D_p}\cap D(|x|^{-2})\right)$  generates a quasi-contractive $C_0$-semigroup in $L^p(\R^N)$.
\end{theorem}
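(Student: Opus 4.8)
The plan is to mirror the proof of Theorem \ref{th1_2bis}, replacing $L$ by $\widetilde L$ and feeding the weighted estimate of Lemma \ref{l:tilde} into the abstract perturbation result. Since the hypothesis $N\le 2p$ forces $\beta_0=\frac{N(p-1)(N-2p)}{p^2}\le 0$, Theorem \ref{Oka-th1.7} is unavailable and the correct abstract tool is Theorem \ref{Oka-th1.6}, which only requires a lower bound of the form $-b\|B_\eps u\|_p^2$ with $b=-\beta_0\ge 0$.

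First I would set up the operators. Let $B$ be multiplication by $|x|^{-2}$ with domain $D(|x|^{-2})$; it is $m$--accretive in $L^p(\R^N)$ and its Yosida approximation is $B_\eps=V_\eps=\frac{1}{|x|^2+\eps}$. By Theorem \ref{thm:CRT}, $\widetilde L$ on $\widetilde{D_p}$ generates a quasi-contractive $C_0$-semigroup and $C_c^\infty(\R^N)$ is a core (the infinitesimal quasi-dissipativity being exactly Proposition \ref{p:tilde}(i) with $V=0$). Consequently I may fix a constant $\omega_0\ge 0$ large enough that $A:=-\widetilde L+\omega_0$, with domain $\widetilde{D_p}$, is $m$--accretive and, simultaneously, $\omega_0+m\ge 0$, where $m$ is the finite constant appearing in Lemma \ref{l:tilde}. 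I take $D=C_c^\infty(\R^N)$ as core of $A$.

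Next I verify the hypothesis of Theorem \ref{Oka-th1.6}. Writing $A=(-\widetilde L-m)+(\omega_0+m)I$ and using the identity ${\mathcal Re}\langle u,|V_\eps u|^{p-2}V_\eps u\rangle=\int_{\R^N}V_\eps^{p-1}|u|^p\,dx\ge 0$, Lemma \ref{l:tilde} yields for every $u\in D$
\begin{align*}
{\mathcal Re}\langle Au,|V_\eps u|^{p-2}V_\eps u\rangle&\ge \beta_0\int_{\R^N}V_\eps^p|u|^p\,dx+\delta_\alpha\int_{\R^N}V_\eps^p|u|^p|x|^\alpha\,dx\\
&\quad+(\omega_0+m)\int_{\R^N}V_\eps^{p-1}|u|^p\,dx.
\end{align*}
Because the assumption $\alpha\ge 1+\frac N2(p-2)$ gives $\delta_\alpha\ge 0$ and because $\omega_0+m\ge 0$, the last two integrals are nonnegative and can be dropped; multiplying by $\|B_\eps u\|_p^{2-p}=\big(\int_{\R^N}V_\eps^p|u|^p\,dx\big)^{(2-p)/p}$ gives
$$
{\mathcal Re}\langle Au,\|B_\eps u\|_p^{2-p}|B_\eps u|^{p-2}B_\eps u\rangle\ge \beta_0\|B_\eps u\|_p^2,
$$
which is precisely the assumption of Theorem \ref{Oka-th1.6} with $b=-\beta_0\ge 0$ and $\tilde c=a=0$.

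Finally I would translate the abstract conclusion, applying Theorem \ref{Oka-th1.6} with $\nu=-c$, so that $A+\nu B=-(\widetilde L+\frac{c}{|x|^2})+\omega_0$. For $\nu>b$, i.e. $c<\beta_0$, this operator is $m$--accretive on $\widetilde{D_p}\cap D(|x|^{-2})$; hence $\widetilde L+\frac{c}{|x|^2}-\omega_0$ is $m$--dissipative and, undoing the shift, $\widetilde L+\frac{c}{|x|^2}$ generates a quasi-contractive $C_0$-semigroup on the stated domain via the Lumer--Phillips theorem. For $\nu=b$, i.e. $c=\beta_0$, the same theorem gives that $A+bB$ is essentially $m$--accretive, so the closure of $\big(\widetilde L+\frac{\beta_0}{|x|^2},\widetilde{D_p}\cap D(|x|^{-2})\big)$ generates a quasi-contractive semigroup. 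The only delicate point is the bookkeeping of $\omega_0$: one must check it can be taken large enough to make $-\widetilde L+\omega_0$ be $m$--accretive while keeping $\omega_0+m\ge 0$, which is what allows the two surplus nonnegative terms to be discarded without altering the threshold $b=-\beta_0$; the remaining steps are a routine transcription of the argument used for $L$.
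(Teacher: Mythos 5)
Your proposal is correct and follows essentially the same route as the paper: the paper's own justification of this theorem is precisely the combination of Theorem \ref{thm:CRT}, Lemma \ref{l:tilde} (with $\delta_\alpha\ge 0$ forced by $\alpha\ge 1+\frac{N}{2}(p-2)$) and Okazawa's Theorem \ref{Oka-th1.6} applied with $b=-\beta_0$. Your explicit bookkeeping of the shift $\omega_0$, chosen so that $-\widetilde L+\omega_0$ is $m$--accretive while $\omega_0+m\ge 0$ lets you discard the surplus nonnegative terms, is a correct elaboration of a step the paper leaves implicit.
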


Let us end with the study of the optimality of the constant $\beta_0$ in \eqref{main-inequality}.
\begin{proposition}
Assume that
\begin{equation}        \label{ipo-C}
{\mathcal Re}\langle -Lu, |V u|^{p-2}V u\rangle \ge C\|Vu\|^p_p,
\end{equation}
for some $C>0$, where $V=\frac 1 {|x|^2}$ and $\alpha \in \N $. Then, $C\le \beta_0$.
\end{proposition}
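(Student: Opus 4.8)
The plan is to complement the lower bound of Lemma \ref{main-lem} with a matching upper bound, by producing an explicit minimizing sequence along which the ratio in \eqref{ipo-C} tends to $\beta_0$. The natural candidate is the \emph{critical} power function. Set
$$
s_0=\frac{N}{p}-2,
$$
which is positive precisely when $N>2p$ (the only regime in which $\beta_0>0$ and \eqref{ipo-C} can hold with $C>0$), and put $u_0(x)=|x|^{-s_0}$. A one–line computation gives $-\Delta u_0=s_0(N-2-s_0)|x|^{-s_0-2}$, and the algebraic identity
$$
s_0(N-2-s_0)=\Big(\frac{N}{p}-2\Big)\frac{N(p-1)}{p}=\frac{N(p-1)(N-2p)}{p^2}=\beta_0
$$
is the heart of the matter: this choice of $s_0$ makes the pure–Laplacian part of the pairing reproduce exactly the constant $\beta_0$. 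Moreover $Vu_0=|x|^{-s_0-2}=|x|^{-N/p}$, so that $|Vu_0|^p=|x|^{-N}$ sits exactly at the borderline of logarithmic integrability.

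Since $u_0\notin C_c^\infty$, I would regularize it by a radial cutoff that concentrates near the origin, \emph{where the unbounded coefficient $|x|^\alpha$ is small}. For small $\eps>0$ let $u_\eps\in C_c^\infty(\R^N\setminus\{0\})$ be radial, equal to $|x|^{-s_0}$ on the plateau annulus $\{\eps\le|x|\le1\}$, and smoothly truncated to zero on the inner shell $\{\tfrac\eps2\le|x|\le\eps\}$ and on the fixed outer shell $\{1\le|x|\le2\}$, with the natural bounds $|\nabla^k u_\eps|\lesssim\eps^{-s_0-k}$ on the inner shell. Then \eqref{ipo-C} applies to each $u_\eps$, and the denominator is governed by the plateau,
$$
\|Vu_\eps\|_p^p=\int_{\R^N}\frac{|u_\eps|^p}{|x|^{2p}}\,dx=\omega_{N-1}\log\frac1\eps+O(1)\qquad(\eps\to0),
$$
the logarithm coming precisely from the critical behaviour $|Vu_0|^p=|x|^{-N}$.

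For the numerator I would split $-Lu_\eps=-\Delta u_\eps-|x|^\alpha\Delta u_\eps$. On the plateau $u_\eps=u_0$ exactly, so the $\Delta$–part contributes $\beta_0\int_{\eps\le|x|\le1}|x|^{-N}dx=\beta_0\,\omega_{N-1}\log\frac1\eps+O(1)$, while the $|x|^\alpha$–part contributes $\beta_0\,\omega_{N-1}\int_\eps^1 r^{\alpha-1}\,dr$, which stays \emph{bounded} as $\eps\to0$ because $\alpha>0$ makes this integral convergent at the origin: this is exactly where the hypothesis $\alpha\in\N$ (in particular $\alpha>0$) enters, and it is why the unbounded diffusion produces only an $O(1)$ correction. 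It remains to check that the two transition shells also contribute $O(1)$. The outer shell is a fixed region and is harmless. The inner shell is the delicate point: there the cutoff derivatives produce factors $\eps^{-1}$, $\eps^{-2}$ and $Vu_\eps\sim\eps^{-s_0-2}$, so the integrand scales like $\eps^{(-s_0-2)p}=\eps^{-N}$, which against the volume $\eps^N$ of the shell yields a contribution of order $\eps^{0}=O(1)$. The balance $(-s_0-2)p+N=0$ at the critical exponent is precisely what keeps this bounded.

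Combining these estimates gives
$$
C\le\frac{{\mathcal Re}\langle -Lu_\eps,|Vu_\eps|^{p-2}Vu_\eps\rangle}{\|Vu_\eps\|_p^p}=\frac{\beta_0\,\omega_{N-1}\log\frac1\eps+O(1)}{\omega_{N-1}\log\frac1\eps+O(1)}\xrightarrow[\eps\to0]{}\beta_0,
$$
whence $C\le\beta_0$, as claimed. (Equivalently, one may argue by dilation: under $u_\lambda(x)=u(\lambda x)$ the $\Delta$–pairing and $\|Vu\|_p^p$ both scale like $\lambda^{2p-N}$, while the $|x|^\alpha$–pairing scales like $\lambda^{2p-N-\alpha}$; thus $\lambda\to\infty$ sends the $L$–ratio to the pure $\Delta$–ratio, and applying this to the critical power recovers $C\le\beta_0$.) The main obstacle, and the only nonroutine step, is the inner–transition estimate: one must verify that at the critical exponent the cutoff errors remain $O(1)$ and so do not pollute the leading logarithm $\beta_0\log(1/\eps)$.
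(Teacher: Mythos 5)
Your argument is correct, and it hinges on the same critical exponent as the paper's proof, but the implementation is genuinely different. The paper also tests \eqref{ipo-C} on radial functions approaching the critical power: it takes $v(r)=r^{\beta}e^{-r/p}$ with $\beta>\frac{2p-N}{p}$, evaluates both sides of \eqref{ipo-C} in closed form via the Euler Gamma function (setting $\delta=\beta p+N-2p>0$), and lets $\delta\to0^{+}$; the hypothesis $\alpha=n\in\N$ is used precisely to write $\Gamma(\delta+n)=(\delta+n-1)\cdots\delta\,\Gamma(\delta)$, so that the contribution of the unbounded part of the coefficient carries a factor $\delta$ and disappears in the limit. You instead use the critical power itself, truncated by cutoffs, and read off $\beta_0$ as the coefficient of the divergent logarithm, the $|x|^{\alpha}$-part being $O(1)$ simply because $\alpha>0$ improves integrability at the origin. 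Your route buys two things: the test functions are genuine elements of $C_c^\infty(\R^N)$ (the paper's $r^{\beta}e^{-r/p}$ is neither compactly supported nor smooth at the origin, so strictly speaking \eqref{ipo-C} must there be assumed on a larger class of $u$), and the hypothesis $\alpha\in\N$ is weakened to $\alpha>0$. What the paper's computation buys is exactness --- no cutoff error terms to control --- and uniformity in $N$ and $p$. On that last point, note that your framing presupposes $N>2p$, whereas the proposition as stated also asserts (in the form of a contradiction with $C>0$) that no such $C$ exists when $\beta_0\le 0$; in fact your construction covers that case verbatim, since the balance $(-s_0-2)p+N=0$ and the resulting logarithmic divergence of $\|Vu_\eps\|_p^p$ do not depend on the sign of $s_0$, but you should say this rather than set the case aside. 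The one step you should write out in full is the inner-shell bound, i.e.\ $|\Delta u_\eps|\lesssim\eps^{-s_0-2}$ together with $|Vu_\eps|^{p-1}\lesssim\eps^{(-s_0-2)(p-1)}$ against the volume $\eps^{N}$, which you have correctly identified as the only delicate point.
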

\begin{proof}
Take $u(x)=v(r)\ge 0$, $r=|x|$. Then
\begin{align*}
{\mathcal Re}\langle -Lu, |V u|^{p-2}V u\rangle &=-\omega_N\int_0^{+\infty}(1+r^\alpha)\left(v''+\frac{N-1}{r}v'\right) r^{-2(p-1)}v^{p-1}r^{N-1}dr\\
&=J,
\end{align*}
where $\omega_N$ denotes the measure of the unit ball in $\R^N$. Choose $v(r)=r^{\beta}e^{-r/p}$, with $\beta>\frac{2p-N}{p}$. Then
\begin{align*}
J&=-\omega_N\int_0^{+\infty}(1+r^\alpha)\left(\beta(\beta+N-2)r^{\delta-1}+\frac{1-N-2\beta}{p}r^\delta+\frac 1 {p^2}r^{\delta+1} \right)e^{-r}dr,
\end{align*}
where we have set $\delta=\beta p+N-2p$. Notice that $\delta>0$ thanks to the choice of $\beta$. Using the properties of the Euler Gamma function, we have
\begin{align*}
J&=-\omega_N \left(\beta(\beta+N-2)+\frac{1-N-2\beta}{p}\delta+\frac 1 {p^2}\delta(\delta+1) \right)\Gamma(\delta)\\
&\quad -\omega_N\left(\beta(\beta+N-2)+\frac{1-N-2\beta}{p}(\delta+\alpha)+\frac 1 {p^2}(\delta+\alpha)(\delta+\alpha+1) \right)\Gamma(\delta+\alpha).
\end{align*}
Now, observe that $\|Vu\|_p^p=\omega_N \Gamma(\delta)$. Hence from \eqref{ipo-C} it follows that
\begin{align*}
C \,\Gamma(\delta)& \le-\left(\beta(\beta+N-2)+\frac{1-N-2\beta}{p}\delta+\frac 1 {p^2}\delta(\delta+1) \right)\Gamma(\delta)\\
&\quad -\left(\beta(\beta+N-2)+\frac{1-N-2\beta}{p}(\delta+\alpha)+\frac 1 {p^2}(\delta+\alpha)(\delta+\alpha+1) \right)\Gamma(\delta+\alpha).
\end{align*}
If $\alpha=n\in \N$ then $\Gamma(\delta+n)=(\delta+n-1)\cdots \delta \Gamma(\delta)$ and the previous estimate yields
\begin{align*}
C & \le-\left(\beta(\beta+N-2)+\frac{1-N-2\beta}{p}\delta+\frac 1 {p^2}\delta(\delta+1) \right)\\
&\quad -\left(\beta(\beta\!+\!N-2)\!+\!\frac{1-N-2\beta}{p}(\delta\!+\!n)\!+\!\frac 1 {p^2}(\delta\!+\!n)(\delta\!+\!n\!+\!1) \right)(\delta\!+\!n-1)\cdots \delta.
\end{align*}
Letting $\delta\to 0^+$ which corresponds to $\beta\to \frac{2p-N}{p}$ eventually implies
$$
C\le \frac{N(p-1)(N-2p)}{p^2}.
$$
Hence $\beta_0$ is the best constant for \eqref{ipo-C} to hold in the case $\alpha\in\N$.
\end{proof}

\section*{Acknowledgments} We are grateful to the referee for valuable comments, suggestions and substantial improvements to the paper.

\end{document}